\newcounter{obs}
\newtheorem{theorem}{Theorem}
\newtheorem{corollary}[theorem]{Corollary}
\newtheorem{proposition}[theorem]{Proposition}
\theoremstyle{definition}
\newtheorem{definition}[theorem]{Definition}
\theoremstyle{remark}
\newtheorem{remark}[theorem]{Remark}
\newtheorem{example}[theorem]{Example}
\numberwithin{equation}{section}
\numberwithin{theorem}{section}
\begin{document}

\title[Strong factorizations of Fourier type representing operators]
{Strong factorizations of operators with applications to Fourier
and Ces\'aro transforms}


\author[O.\ Delgado]{O.\ Delgado}
\address{Departamento de Matem\'atica Aplicada I, E.\ T.\ S.\ de Ingenier\'ia
de Edificaci\'on, Universidad de Sevilla, Avenida de Reina Mercedes,
4 A,  Sevilla 41012, Spain.}
\email{\textcolor[rgb]{0.00,0.00,0.84}{olvido@us.es}}

\author[M.\ Masty{\l}o]{M.\ Masty{\l}o}
\address{Faculty of Mathematics and Computer Science,
Adam Mickiewicz University in Pozna\'n, Umultowska 87, 61-614
Pozna{\'n}, Poland.}
\email{\textcolor[rgb]{0.00,0.00,0.84}{mastylo@amu.edu.pl}}

\author[E.\ A.\ S\'{a}nchez P\'{e}rez]{E.\ A.\ S\'{a}nchez P\'{e}rez}
\address{Instituto Universitario de Matem\'atica Pura y Aplicada,
Universitat Polit\`ecnica de Val\`encia, Camino de Vera s/n, 46022
Valencia, Spain.}
\email{\textcolor[rgb]{0.00,0.00,0.84}{easancpe@mat.upv.es}}

\subjclass[2010]{Primary 46E30, 47B38; Secondary 46B15, 43A25}

\keywords{Banach function space, Strong factorization, Schauder basis,
Fourier operator, representing operator, Ces\`aro operator}

\thanks{The first author gratefully acknowledge the support of the
Ministerio de Econom\'{\i}a y Compe\-ti\-tividad
(project \#MTM2015-65888-C4-1-P) and the Junta de Andaluc\'{\i}a
(project FQM-7276), Spain. The second author was supported by
National Science Centre, Poland, project no. 2015/17/B/ST1/00064.
The third author acknowledges with thanks the support of the
Ministerio de Econom\'{\i}a y Competitividad (project
MTM2016-77054-C2-1-P), Spain.}


\maketitle


\begin{abstract}
Consider two continuous linear operators $T\colon X_1(\mu)\to
Y_1(\nu)$ and $S\colon X_2(\mu)\to Y_2(\nu)$ between Banach
function spaces related to different $\sigma$-finite measures
$\mu$ and $\nu$. We characterize by means of weighted norm
inequalities when $T$ can be strongly factored through $S$, that
is, when there exist functions $g$ and $h$ such that $T(f)=gS(hf)$
for all $f\in X_1(\mu)$. For the case of spaces with Schauder
basis our characterization can be improved, as we show when $S$ is
for instance the Fourier operator, or the Ces\`aro operator. Our
aim is to study the case when the map $T$ is besides injective.
Then we say that it is a~representing operator ---in the sense
that it allows to represent each  elements of the Banach function
space $X(\mu)$ by a~sequence of generalized Fourier
coefficients---, providing a complete characterization of these
maps in terms of weighted norm inequalities. Some examples and
applications involving recent results on the Hausdorff-Young  and
the Hardy-Littlewood inequalities for operators on weighted Banach
function spaces are also provided.
\end{abstract}



\section{Introduction}

Let $X_1(\mu)$, $X_2(\mu)$, $Y_1(\nu)$, $Y_2(\nu)$ be Banach
function spaces related to different $\sigma$-finite measures
$\mu$, $\nu$ and consider two continuous linear operators $T\colon
X_1(\mu)\to Y_1(\nu)$, $S\colon X_2(\mu)\to Y_2(\nu)$. In this paper we provide
a~characterization in terms
of weighted norm inequalities of when $T$ can be factored through $S$
via multiplication operators, that is, when there are functions
$g$~and $h$ satisfying that $T(f)=gS(hf)$ for all $f\in X_1(\mu)$.

This problem was studied in \cite{delgado-sanchezperez2} for the
case when $\mu$ and $\nu$ are the same finite measure.
However, the results developed there do not allow to face the problem we study here,
in which different $\sigma$-finite  measures $\mu$ and $\nu$ appear in order to consider
the relevant case of the classical sequence spaces $\ell^p$. The reason is that
we are interested in considering standard cases as the Fourier and the Ces\`aro operators,
that will be in fact our main examples.

In this direction, we will show that in the case when the K\"{o}the dual $Y_1(\nu)'$ of $Y(\nu)$ and
$X_1(\mu)$ have Schauder basis, the norm inequality which
characterizes the factorization of $T$ through $S$ can be weakened. After showing this, we will
develop with some detail some the examples regarding Fourier operators, operators factoring though
infinite matrices and the Ces\`aro operator. This will allow to introduce the notion of
\textit{representing operator} and to study it in the second part of the paper.

Let us explain  briefly this notion. With the notation introduced above,
assume that $Y_1(\nu)$ and $Y_2(\nu)$ have  unconditional basis $\mathcal U_1:=\{v_i: i \in \mathbb N \}$ and $\mathcal U_2:=\{e_i: i \in \mathbb N \}$, respectively.
 Suppose that there exists a Schauder basis $\mathcal B:\{f_i: \in \mathcal N \}$ for the space $X_2(\mu)$ and write $\alpha_i(f)$ for the $i$-th basic coefficient of $f \in X_2(\mu)$, $i \in \mathbb N$.
 We will say that an operator $T: X_1(\mu) \to Y_1(\nu)$ is a \textit{representing operator} for $X_1(\mu)$ on $Y_1(\nu)$ (associated to the basis $\mathcal B$ of $X_2(\mu)$) if \textit{each element $x \in X_1(\mu)$ can be represented univocally by a sequence of coefficients $(\beta_i(x))$ such that $ \sum_{i=1}^\infty \beta_i(x) v_i \in Y_1(\nu)$, where the coefficients $\beta_i(x)$ can be computed by means of the associated  values of $\alpha_i$ by a simple transformation provided by multiplication operators.}

Thus, the last part of the paper is devoted to find a characterization of such operators in terms of vector norm inequalities that they must satisfy. We  provide also classical and recently published
examples of such kind of maps, using for instance an improvement of the Hausdorff-Young inequality given in \cite{kellogg}, or the continuity of the Fourier operator $\mathcal H_p: L^p[- \pi, \pi] \to \ell^p(W)$ ---where $\ell^p(W)$ is a weighted $\ell^p$-space--- that can be found in \cite{ash}.


\section{Preliminaries}\label{SEC: Preliminaries}

Let $(\Omega,\Sigma,\mu)$ be a $\sigma$-finite measure space and
denote by $L^0(\mu)$ the space of all measurable real functions
defined on $\Omega$, where functions which are equal $\mu$-a.e.\
are identified. By a~\emph{Banach function space} we mean a~Banach space $X(\mu)\subset L^0(\mu)$
with norm $\Vert\cdot\Vert_{X}$ satisfying that if $f\in
L^0(\mu)$, $g\in X(\mu)$ and $|f|\le|g|$ $\mu$-a.e.\ then $f\in
X(\mu)$ and $\Vert f\Vert_{X}\le\Vert g\Vert_{X}$. In particular
$X(\mu)$ is a Banach lattice for the $\mu$-a.e.\ pointwise order,
in which the convergence in norm of a~sequence implies the
convergence $\mu$-a.e.\ for some subsequence. Note that every
positive linear operator between Banach lattices is continuous,
(see \cite[p.\,2]{lindenstrauss-tzafriri}). So, all inclusions
between  Banach function
spaces are continuous. General information about Banach function spaces can be found for
instance in \cite[Ch.\,15]{zaanen} considering the function norm
$\rho$ defined there as $\rho(f)=\Vert f\Vert_{X}$ if $f\in
X(\mu)$ and $\rho(f)=\infty$ in other case.

A Banach function space
$X(\mu)$ is said to be \emph{saturated} if there is no
$A\in\Sigma$ with $\mu(A)>0$ such that $f\chi_A=0$ $\mu$-a.e.\ for
all $f\in X(\mu)$. This is equivalent to the existence of
a~function $g\in X(\mu)$ such that $g>0$ $\mu$-a.e.

Given two  Banach function
spaces $X(\mu)$ and $Y(\mu)$, the \emph{$Y(\mu)$-dual space} of
$X(\mu)$ is defined by
$$
X^Y=\big\{h\in L^0(\mu):\, fh\in Y(\mu) \textnormal{ for all } f\in X(\mu)\big\}.
$$
Every $h\in X^Y$ defines a continuous multiplication operator
$M_h\colon X(\mu)\to Y(\mu)$ via $M_h(f)=fh$ for all $f\in
X(\mu)$. The space $X^Y$ is a
Banach function space with norm
$$
\Vert h\Vert_{X^Y}=\sup_{f\in B_X}\Vert hf\Vert_Y, \quad\, h\in
X^Y,
$$
if and only if $X(\mu)$ is saturated. As usual $B_X$ denotes the closed unit ball
of $X(\mu)$. Note that $X^{L^1}$ is just the classical K\"{o}the dual space $X(\mu)'$ of $X(\mu)$.
If $X(\mu)$ is saturated then $X(\mu)'$ is also saturated. This does not hold in general for $X^Y$.
For issues related to generalized dual spaces see  \cite{calabuig-delgado-sanchezperez} and the references therein.

A saturated  Banach function
space $X(\mu)$ is  contained in its
K\"{o}the bidual $X(\mu)''$ with $\Vert f\Vert_{X''}\le\Vert
f\Vert_X$ for all $f\in X(\mu)$. It is known that $\Vert
f\Vert_{X''}=\Vert f\Vert_X$ for all $f\in X(\mu)$ if and only if
$X(\mu)$ is \emph{order semi-continuous}, that is, if for every
$f,f_n\in X(\mu)$ such that $0\le f_n\uparrow f$ $\mu$-a.e.\ it
follows that $\Vert f_n\Vert_X\uparrow\Vert f\Vert_X$. Even more,
$X(\mu)=X(\mu)''$ with equal norms if and only if $X(\mu)$ has the
\emph{Fatou property}, that is, if for every $f_n\in X(\mu)$ such
that $0\le f_n\uparrow f$ $\mu$-a.e.\ and $\sup_n\Vert
f_n\Vert_{X}<\infty$, we have that $f\in X(\mu)$ and $\Vert
f_n\Vert_{X}\uparrow\Vert f\Vert_{X}$.

Denote by $X(\mu)^*$ the topological dual of a~saturated
 Banach function space
$X(\mu)$. Every function $h\in X(\mu)'$ defines an element
$\eta(h)\in X(\mu)^*$ via $\langle \eta(h),f\rangle=\int hf\,d\nu$
for all $f\in X(\mu)$. The map $\eta\colon X(\mu)'\to X(\mu)^*$ is
a continuous linear injection, since the norm of every $h\in
X(\mu)'$ can be computed as
$$
\Vert h\Vert_{X'}=\sup_{f\in B_X}\Big|\int hf\,d\mu\Big|
$$
and so $\eta$ is an isometry. It is known that $\eta$ is surjective if and only if
$X(\mu)$ is \emph{$\sigma$-order continuous}, that is, if for every $(f_n)\subset
X(\mu)$ with $f_n\downarrow0$ $\mu$-a.e.\ it follows that $\Vert f_n\Vert_{X}\downarrow0$.
Note that $\sigma$-order continuity implies order semi-continuity.

The \emph{$\sigma$-order continuous part} $X_a(\mu)$ of a~
 saturated Banach function
space $X(\mu)$ is the largest $\sigma$-order continuous closed
solid subspace of $X(\mu)$, which can be described as
$$
X_a(\mu)=\big\{f\in X(\mu):\, |f|\ge f_n\downarrow0
\textnormal{$\mu$-a.e.\ implies } \Vert
f_n\Vert_X\downarrow0\big\}.
$$
Also, a function $f\in X_a(\mu)$ if and only if $f\in X(\mu)$
satisfies that $\Vert f\chi_{A_n}\Vert_X\downarrow0$ whenever
$(A_n)\subset\Sigma$ is such that $A_n\downarrow$ with $\mu(\cap
A_n)=0$. Note that $X_a(\mu)$ could be the trivial space as in the
case of $X(\mu)=L^\infty(\mu)$ when $\mu$ is nonatomic.
 In the case when $X_a(\mu)$ is saturated,
$X_a(\mu)$ is order dense in $L^0(\mu)$ and so by the Monotone
Convergence Theorem, it follows easily that $X_a(\mu)'=X(\mu)'$
with equal norms.

The \emph{$\pi$-product space} $X\pi Y$ of two  Banach function spaces $X(\mu)$ and
$Y(\mu)$ is defined as the space of functions $h\in L^0(\mu)$ such
that $|h|\le\sum_n|f_ng_n|$ $\mu$-a.e.\ for some sequences
$(f_n)\subset X(\mu)$ and $(g_n)\subset Y(\mu)$ satisfying
$\sum_n\Vert f_n\Vert_X\Vert g_n\Vert_Y<\infty$. For $h\in X\pi
Y$, consider the norm
$$
\pi(h)=\inf\Big\{\sum_n\Vert f_n\Vert_X\Vert g_n\Vert_Y\Big\},
$$
where the infimum is taken over all sequences $(f_n)\subset
X(\mu)$ and $(g_n)\subset Y(\mu)$ such that
$|h|\le\sum_n|f_ng_n|$ $\mu$-a.e. and
$\sum_n\Vert f_n\Vert_X\Vert g_n\Vert_Y<\infty$. The space $X\pi
Y$ is a saturated Banach
function space with norm $\pi$ if and only if $X(\mu)$, $Y(\mu)$
and $X^{Y'}$ are saturated and, in this case, $(X\pi Y)'=X^{Y'}$
with equal norms (see \cite[Proposition2.2]{delgado-sanchezperez1}).  The calculus of product spaces is nowadays well-known (see \cite{calabuig-delgado-sanchezperez,delgado-sanchezperez1,kolesmal,schepmultip}); the reader can find all the information that is needed on this construction in these papers.


 Banach function spaces on the measure space
$(\mathbb{N},\mathcal{P}(\mathbb{N}),\lambda)$ with the counting
measure $\lambda$ are are called usually Banach sequence spaces.
The classical Banach sequence spaces $\ell^p$ for $1\le
p\le\infty$ is saturated which is $\sigma$-order continuous if and
only if $p<\infty$. As usual for each $n\in\mathbb{N}$, we denote
by $(e^n)$ the standard unit vector basis in $c_0$.

We recall the well known easily verified formula $
(\ell^p)^{\ell^q}=\ell^{s_{pq}}$ with equal norms, where
\begin{equation}\label{EQ: spq-Def}
1\le s_{pq}=\left\{\begin{array}{ll}
\frac{pq}{p-q} & \textnormal{if } 1\le q<p<\infty \\ q & \textnormal{if } 1\le q<p=\infty \\
\infty & \textnormal{if } 1\le p\le q\le\infty
\end{array}\right..
\end{equation}
In particular, $(\ell^p)'=(\ell^p)^{\ell^1}=\ell^{p'}$ where $p'$ denote the conjugate exponent of
$p$ ($\frac{1}{p}+\frac{1}{p'}=1$). Note that $\ell^p$ has the Fatou property as $(\ell^p)''=\ell^p$.
Also note that $s_{pq}=1$ if and only if $q=1$ and $p=\infty$.


\section{Strong factorization of operators on Banach
function spaces}\label{SEC: StrongFactor}

Let $(\Omega,\Sigma,\mu)$, $(\Delta, \Gamma, \nu)$ be
$\sigma$-finite measure spaces, $X_1(\mu)$, $X_2(\mu)$,
$Y_1(\nu)$, $Y_2(\nu)$ saturated
 Banach function spaces and $T\colon X_1(\mu)\to
Y_1(\nu)$, $S\colon X_2(\mu)\to Y_2(\nu)$ nontrivial continuous
linear operators. For $h\in X_1^{X_2}$, we will say that $T$
\emph{factors strongly} through $S$ and $M_h$ if there exists
$g\in Y_2^{Y_1''}$ such that the diagram
\begin{equation}\label{EQ: StrongFactor}
\begin{split}
\xymatrix{
X_1(\mu) \ \ar[r]^{T} \ar@{.>}[d]_{M_h} & \ Y_1(\nu) \ \ar@{.>}[r]^{i} & \ Y_1(\nu)'' \\
X_2(\mu) \ \ar[r]^{S} & \ Y_2(\nu) \ \ar@{.>}[ru]_{M_g}}
\end{split}
\end{equation}
commutes. Here $i$ denotes the inclusion map. Note that if $Y_1(\nu)$ has the Fatou property
the diagram above looks as
$$
\xymatrix{
X_1(\mu) \ \ar[r]^{T} \ar@{.>}[d]_{M_h} & \ Y_1(\nu) \\
X_2(\mu) \ \ar[r]^{S} & \ Y_2(\nu) \ \ar@{.>}[u]_{M_g}}
$$
In the case when $\mu$ and $\nu$ are the same finite measure and under certain extra conditions,
\cite[Theorem 4.1]{delgado-sanchezperez2} characterizes when $T$ factors strongly through $S$ and $M_h$.
In this section we extend this theorem to our more general setting and improve it by relaxing the conditions.
The extension will be obtained from the following broader factorization result.

\begin{theorem}\label{THM: GeneralFactor}
Assume that $Y_2^{Y_1''}$ is saturated and consider a function
$h\in X_1^{X_2}$. The following statements are equivalent{\rm:}
\begin{itemize}\setlength{\leftskip}{-3ex}\setlength{\itemsep}{.5ex}
\item[(a)] There exists a constant $C>0$ such that the inequality
$$
\sum_{i=1}^n\int T(x_i)y_i'\,d\nu\le
C\Big\Vert\sum_{i=1}^nS(hx_i)y_i'\Big\Vert_{Y_2\pi Y_1'}, \quad\,
n\in\mathbb{N},
$$
holds for every  $n\in\mathbb{N}$, $x_1,...,x_n\in
X_1(\mu)$ and $y_1',...,y_n'\in Y_1(\nu)'$.

\item[(b)] There exists $\xi^*\in(Y_2\pi Y_1')^*$ satisfying the
following factorization between the operators $T$ and $S${\rm:}
$$
\xymatrix{
X_1(\mu) \ \ar[r]^{T} \ar@{.>}[d]_{M_h} & \ Y_1(\nu) \ \ar@{.>}[r]^{i}
& \ Y_1(\nu)'' \ \ar@{.>}[r]^{\eta} & \ Y_1(\nu)'^* \\
X_2(\mu) \ \ar[r]^{S} & \ Y_2(\nu) \ \ar@{.>}[rru]_{R_{\xi^*}}}
$$
where $\eta$ is the continuous linear injection of $Y_1(\nu)''$ into $Y_1(\nu)'^*$ and $R_{\xi^*}$ is the continuous
linear operator defined by $\langle R_{\xi^*}(y_2),y_1'\rangle=\langle \xi^*,y_2y_1'\rangle$ for $y_2\in Y_2(\nu)$
and $y_1'\in Y_1(\nu)'$.
\end{itemize}
\end{theorem}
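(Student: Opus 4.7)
The equivalence is a Hahn--Banach argument whose main point is to identify $Y_2\pi Y_1'$ as the correct ambient Banach function space. The saturation hypothesis on $Y_2^{Y_1''}$, together with the saturation of $Y_2(\nu)$ and $Y_1(\nu)'$, is precisely what guarantees, via the fact recalled in the Preliminaries, that $Y_2\pi Y_1'$ is itself a saturated Banach function space (with dual $Y_2^{Y_1''}$) and that the product map obeys $\pi(y_2 y_1')\le\|y_2\|_{Y_2}\|y_1'\|_{Y_1'}$ for all $y_2\in Y_2(\nu)$, $y_1'\in Y_1(\nu)'$.

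The direction (b)$\Rightarrow$(a) is immediate: evaluating the diagram of (b) against an arbitrary $y_1'\in Y_1(\nu)'$ yields the pointwise identity
\[
\int T(x)y_1'\,d\nu=\langle\xi^*, S(hx)y_1'\rangle,
\]
and summing over $i=1,\dots,n$ followed by the boundedness of $\xi^*$ on $Y_2\pi Y_1'$ gives (a) with $C=\|\xi^*\|_{(Y_2\pi Y_1')^*}$.

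For (a)$\Rightarrow$(b), the plan is as follows. Let $W$ be the linear span in $Y_2\pi Y_1'$ of the products $S(hx)y_1'$ with $x\in X_1(\mu)$ and $y_1'\in Y_1(\nu)'$, and define on $W$ the candidate functional
\[
\ell\Big(\sum_{i=1}^{n}S(hx_i)y_i'\Big):=\sum_{i=1}^{n}\int T(x_i)y_i'\,d\nu.
\]
Apply inequality (a) simultaneously to $\{x_i\}$ and to $\{-x_i\}$ to conclude that $\ell$ is well defined on $W$ (when a representation equals zero in $Y_2\pi Y_1'$, the two one-sided bounds force the right-hand side to vanish) and satisfies $|\ell(\cdot)|\le C\pi(\cdot)$ there. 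Extend $\ell$ to $\xi^*\in(Y_2\pi Y_1')^*$ by Hahn--Banach, define $R_{\xi^*}\colon Y_2(\nu)\to Y_1(\nu)'^*$ by $\langle R_{\xi^*}(y_2),y_1'\rangle:=\langle\xi^*,y_2y_1'\rangle$ (whose continuity follows from the product estimate above), and verify commutativity of the diagram by unwinding:
\[
\langle R_{\xi^*}(S(hx)),y_1'\rangle=\langle\xi^*,S(hx)y_1'\rangle=\int T(x)y_1'\,d\nu=\langle\eta(i(T(x))),y_1'\rangle
\]
for every $x\in X_1(\mu)$ and $y_1'\in Y_1(\nu)'$.

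The only genuinely subtle point, and hence the main obstacle, is the well-definedness of $\ell$ on $W$: the inequality in (a) is one-sided in sign, so it must be applied both to $\{x_i\}$ and to $\{-x_i\}$ to upgrade a sublinear estimate to a bona fide bounded linear functional on the linear subspace $W$. Once this is handled, the remaining steps --- correct identification of the ambient space, construction of $R_{\xi^*}$, and verification of the diagram --- are essentially automatic.
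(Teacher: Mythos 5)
Your proof is correct, but it takes a genuinely different route from the paper's in the direction (a)$\Rightarrow$(b). You define the functional $\ell$ directly on the linear span $W$ of the products $S(hx)y'$ inside $Y_2\pi Y_1'$, use the two-sided application of the inequality in (a) to establish well-definedness and the bound $|\ell|\le C\pi(\cdot)$ on $W$, and then invoke the dominated-extension form of the Hahn--Banach theorem. The paper instead never defines a functional on $W$: it applies Ky Fan's lemma to the concave family of weak*-continuous convex functions $\phi(\xi^*)=\sum_i\int T(x_i)y_i'\,d\nu-C\sum_i\langle\xi^*,S(hx_i)y_i'\rangle$ on the weak*-compact ball $B_{(Y_2\pi Y_1')^*}$, using a norming functional (Hahn--Banach in its geometric form) to check that each $\phi$ attains a non-positive value, and thereby produces a single $\xi^*$ in the ball with $\phi(\xi^*)\le 0$ for all members of the family. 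Your approach is more elementary and makes explicit the one point the minimax argument silently bypasses, namely that a representation of $0$ in $W$ forces the corresponding sum of integrals to vanish (which you handle correctly by applying (a) to $\{x_i\}$ and $\{-x_i\}$); the Ky Fan route is the standard template for Pietsch-type domination theorems and generalizes more readily to situations where the right-hand side is not the norm of a single element of a linear space. Both arguments yield $\Vert\xi^*\Vert_{(Y_2\pi Y_1')^*}\le C$, and your treatment of (b)$\Rightarrow$(a) and of the continuity of $R_{\xi^*}$ coincides with the paper's.
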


\begin{proof}
 Note that the condition of $Y_2^{Y_1''}$ being saturated assures that $Y_2\pi Y_1'$ is a~saturated
  Banach function space. Also note that the map $R_{\xi^*}\colon Y_2(\nu)\to Y_1(\nu)'^*$
 defined in (b) is a well defined continuous linear operator as
$$
|\langle R_{\xi^*}(y_2),y_1'\rangle|\le\Vert\xi^*\Vert_{(Y_2\pi Y_1')^*}\Vert y_2y_1'\Vert_{Y_2\pi Y_1'}\le
\Vert\xi^*\Vert_{(Y_2\pi Y_1')^*}\Vert y_2\Vert_{Y_2}\Vert y_1'\Vert_{Y_1'}
$$
for all $y_2\in Y_2(\nu)$ and $y_1'\in Y_1(\nu)'$.

(a) $\Rightarrow$ (b) For every $n\in\mathbb{N}$, $x_1,...,x_n\in X_1(\mu)$ and
$y_1',...,y_n'\in Y_1(\nu)'$ we take the convex function $\phi\colon B_{(Y_2\pi Y_1')^*}\to\mathbb{R}$ given by
$$
\phi(\xi^*)=\sum_{i=1}^n\int T(x_i)y_i'\,d\nu-C\sum_{i=1}^n\langle \xi^*,S(hx_i)y_i'\rangle
$$
for all $\xi^*\in B_{(Y_2\pi Y_1')^*}$. Considering the weak*
topology on $(Y_2\pi Y_1')^*$ we have that $\phi$ is a~continuous
map on a~compact convex set. Moreover, from the Hahn-Banach
Theorem there exists $\xi_\phi^*\in B_{(Y_2\pi Y_1')^*}$ such that
$$
\Big\Vert\sum_{i=1}^nS(hx_i)y_i'\Big\Vert_{Y_2\pi Y_1'}=\Big\langle \xi_\phi^*,\sum_{i=1}^nS(hx_i)y_i'\Big\rangle
$$
and so, by (a), it follows that $\phi(\xi_\phi^*)\le0$.

Since the family $\mathcal{F}$ of functions $\phi$ defined in this way is concave,
Ky Fan's lemma (see for instance \cite[E.4]{pietsch}) guarantees the existence of an element
$\xi^*\in B_{(Y_2\pi Y_1')^*}$ such that $\phi(\xi^*)\le0$ for all $\phi\in\mathcal{F}$.
In particular, for every $x\in X_1(\mu)$ and
$y'\in Y_1(\nu)'$, we have that
$$
\int T(x)y'\,d\nu\le C\langle \xi^*,S(hx)y'\rangle.
$$
By taking $-y'$ instead of $y'$, we obtain that
$$
-\int T(x)y'\,d\nu\le-C\langle \xi^*,S(hx)y'\rangle
$$
and so
$$
\langle\eta\big(T(x)\big),y'\rangle=\langle R_{C\xi^*}\big(S(hx)\big),y'\rangle.
$$
Therefore, $\eta\big(T(x)\big)=R_{C\xi^*}\big(S(hx)\big)$ for all $x\in X_1(\mu)$ and
the factorization in (b) holds for $C\xi^*\in(Y_2\pi Y_1')^*$.

(b) $\Rightarrow$ (a)  For each $n\in\mathbb{N}$ and every
$x_1,...,x_n\in X_1(\mu)$, $y_1',...,y_n'\in Y_1(\nu)'$ we have
that

\begin{eqnarray*}
\sum_{i=1}^n\int T(x_i)y_i'\,d\nu & = & \sum_{i=1}^n\langle\eta\big(T(x_i)\big),y_i'\rangle=
\sum_{i=1}^n\langle R_{\xi^*}\big(S(hx_i)\big),y_i'\rangle \\ & = &
\sum_{i=1}^n\langle \xi^*,S(hx_i)y_i'\rangle = \Big\langle \xi^*,\sum_{i=1}^nS(hx_i)y_i'\Big\rangle
\\ & \le & \Vert\xi^*\Vert_{(Y_2\pi Y_1')^*}\Big\Vert\sum_{i=1}^nS(hx_i)y_i'\Big\Vert_{Y_2\pi Y_1'}.
\end{eqnarray*}
Note that $\Vert\xi^*\Vert_{(Y_2\pi Y_1')^*}>0$ as $T$ is nontrivial.
\end{proof}

Note that the condition of $Y_2^{Y_1''}$ being saturated is
obtained for instance if $Y_2(\nu)\subset Y_1(\nu)''$ which is
equivalent to $L^\infty(\nu)\subset Y_2^{Y_1''}$. Also note that
the condition (a) of Theorem \ref{THM: GeneralFactor} is
equivalent to
$$
\left|\sum_{i=1}^n\int T(x_i)y_i'\,d\nu\right|\le
C\Big\Vert\sum_{i=1}^nS(hx_i)y_i'\Big\Vert_{Y_2\pi Y_1'}, \quad\,
n\in\mathbb{N}
$$
 for every
$x_1,...,x_n\in X_1(\mu)$ and $y_1',...,y_n'\in Y_1(\nu)'$.
Indeed, we only have to take $-y_1',...,-y_n'$ instead of
$y_1',...,y_n'$ in Theorem \ref{THM: GeneralFactor}.(a).

As a consequence of Theorem \ref{THM: GeneralFactor}, we obtain the following generalization and
improvement of \cite[Theorem 4.1]{delgado-sanchezperez2}.

\begin{corollary}\label{COR: StrongFactor}
Assume that $Y_2^{Y_1''}$ is saturated and that $y_2y_1'\in
(Y_2\pi Y_1')_a$ for all $y_2\in Y_2(\nu)$ and $y_1'\in
Y_1(\nu)'$. Given $h\in X_1^{X_2}$, the following statements are
equivalent{\rm:}
\begin{itemize}\setlength{\leftskip}{-3ex}\setlength{\itemsep}{.5ex}
\item[(a)] $T$ factors strongly through $S$
and $M_h$.

\item[(b)] There exists a constant $C>0$ such that the inequality
$$
\sum_{i=1}^n\int T(x_i)y_i'\,d\nu\le
C\Big\Vert\sum_{i=1}^nS(hx_i)y_i'\Big\Vert_{Y_2\pi Y_1'}, \quad\,
n\in\mathbb{N}
$$
holds for all every $x_1,...,x_n\in X_1(\mu)$ and
$y_1',...,y_n'\in Y_1(\nu)'$.
\end{itemize}
\end{corollary}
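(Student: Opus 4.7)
The plan is to apply Theorem \ref{THM: GeneralFactor} as a black box and then convert the abstract functional $\xi^*\in(Y_2\pi Y_1')^*$ it produces into an honest multiplier $g\in Y_2^{Y_1''}$, which is exactly what is needed to close the diagram \eqref{EQ: StrongFactor}. The bridge between the two formulations is the identification $(Y_2\pi Y_1')'=Y_2^{Y_1''}$ recorded in the Preliminaries, together with the fact that continuous linear functionals on a $\sigma$-order continuous Banach function space are precisely integrals against Köthe-dual elements.

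The direction (a)$\Rightarrow$(b) is routine. If $i\circ T=M_g\circ S\circ M_h$ for some $g\in Y_2^{Y_1''}$, then using the $Y_1''$--$Y_1'$ pairing and the identification $(Y_2\pi Y_1')'=Y_2^{Y_1''}$ one obtains
\[
\sum_{i=1}^n\int T(x_i)y_i'\,d\nu=\int g\sum_{i=1}^n S(hx_i)y_i'\,d\nu\le\|g\|_{Y_2^{Y_1''}}\,\Big\|\sum_{i=1}^n S(hx_i)y_i'\Big\|_{Y_2\pi Y_1'},
\]
which is exactly (b) with $C=\|g\|_{Y_2^{Y_1''}}$.

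For (b)$\Rightarrow$(a), Theorem \ref{THM: GeneralFactor} supplies $\xi^*\in(Y_2\pi Y_1')^*$ satisfying $\eta(T(x))=R_{\xi^*}(S(hx))$ for every $x\in X_1(\mu)$. I would first verify that the $\sigma$-order continuous part $(Y_2\pi Y_1')_a$ is itself a saturated Banach function space: since $Y_2(\nu)$ and $Y_1(\nu)'$ are saturated by the standing assumptions of Section \ref{SEC: StrongFactor}, one picks strictly positive $y_2\in Y_2(\nu)$ and $y_1'\in Y_1(\nu)'$, and the hypothesis places the strictly positive product $y_2 y_1'$ inside $(Y_2\pi Y_1')_a$. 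Then the identity $((Y_2\pi Y_1')_a)'=(Y_2\pi Y_1')'=Y_2^{Y_1''}$ from the Preliminaries, combined with the surjectivity of the canonical map $\eta$ on $\sigma$-order continuous spaces, represents the restriction of $\xi^*$ to $(Y_2\pi Y_1')_a$ by integration against some $g\in Y_2^{Y_1''}$, and the product hypothesis upgrades this to $\langle \xi^*,y_2 y_1'\rangle=\int g y_2 y_1'\,d\nu$ for every $y_2\in Y_2(\nu)$ and $y_1'\in Y_1(\nu)'$.

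Finally, specialising $y_2=S(hx)$ in the factorization identity of Theorem \ref{THM: GeneralFactor} yields $\langle \eta(T(x)),y_1'\rangle=\int g\,S(hx)\,y_1'\,d\nu=\langle \eta(gS(hx)),y_1'\rangle$ for every $y_1'\in Y_1(\nu)'$; since $g\in Y_2^{Y_1''}$ forces $gS(hx)\in Y_1(\nu)''$ and $\eta$ is injective, one concludes $i(T(x))=M_g(S(M_h(x)))$ in $Y_1(\nu)''$, i.e.\ the commutativity of \eqref{EQ: StrongFactor}. The main obstacle is the third step: checking that $(Y_2\pi Y_1')_a$ is saturated so that the duality results of the Preliminaries become applicable, and that the product hypothesis really delivers an integral representation of $\xi^*$ on all the elementary products needed to pair with the range of $T$.
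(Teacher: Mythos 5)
Your proposal is correct and follows essentially the same route as the paper: both directions rest on Theorem \ref{THM: GeneralFactor}, and the key step (b)$\Rightarrow$(a) converts the abstract functional $\xi^*$ into a multiplier $g\in Y_2^{Y_1''}$ exactly as the paper does --- by checking that the product hypothesis makes $(Y_2\pi Y_1')_a$ saturated, restricting $\xi^*$ to that $\sigma$-order continuous part, and representing the restriction by integration against an element of $((Y_2\pi Y_1')_a)'=(Y_2\pi Y_1')'=Y_2^{Y_1''}$. The only cosmetic difference is that you prove (a)$\Rightarrow$(b) by a direct H\"older-type estimate, whereas the paper routes it through part (b) of Theorem \ref{THM: GeneralFactor} via the injection $\widetilde{\eta}(g)$; the underlying computation is identical.
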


\begin{proof}
First note that $(Y_2\pi Y_1')_a$ is saturated. Indeed, by taking $0<y_2\in Y_2(\nu)$ and $0<y_1'\in Y_1(\nu)'$
we have that $0<y_2y_1'\in (Y_2\pi Y_1')_a$. Then,
$$
(Y_2\pi Y_1')_a'=(Y_2\pi Y_1')'=Y_2^{Y_1''}.
$$
(b) $\Rightarrow$ (a)  From Theorem \ref{THM: GeneralFactor} there exists $\xi^*\in(Y_2\pi Y_1')^*$ such that
$$
\langle\eta\big(T(x)\big),y'\rangle=\langle R_{\xi^*}\big(S(hx)\big),y'\rangle=\langle\xi^*,S(hx)y'\rangle
$$
for all $x\in X_1(\mu)$ and $y'\in Y_1(\nu)'$.
Denote by $\widetilde{\xi}^*$ the restriction of $\xi^*$ to $(Y_2\pi Y_1')_a$.
Since $(Y_2\pi Y_1')_a$ is $\sigma$-order continuous and $\widetilde{\xi}^*\in(Y_2\pi Y_1')_a^*$, we can identify
$\widetilde{\xi}^*$ with a function $g\in(Y_2\pi Y_1')_a'=Y_2^{Y_1''}$, that is, $\langle\widetilde{\xi}^*,
z\rangle=\int gz\,d\nu$ for all $z\in(Y_2\pi Y_1')_a$.
Then, for every $x\in X_1(\mu)$ and $y'\in Y_1(\nu)'$, we have that
$$
\langle\eta\big(T(x)\big),y'\rangle=\langle\xi^*,S(hx)y'\rangle=\langle\widetilde{\xi}^*,S(hx)y'\rangle=\int gS(hx)y'\,
d\nu=\langle\eta\big(gS(hx)\big),y'\rangle
$$
and so $T(x)=gS(hx)$.

(a) $\Rightarrow$ (b) Let $g\in Y_2^{Y_1''}=(Y_2\pi Y_1')'$ be such that $T(x)=gS(hx)$ for all $x\in X_1(\mu)$.
Consider the continuous linear injection  $\widetilde{\eta}\colon (Y_2\pi Y_1')'\to (Y_2\pi Y_1')^*$. Then
$\widetilde{\eta}(g)\in(Y_2\pi Y_1')^*$ satisfies
\begin{eqnarray*}
\langle R_{\widetilde{\eta}(g)}\big(S(hx)\big),y'\rangle &
= & \langle\widetilde{\eta}(g),S(hx)y'\rangle=\int gS(hx)y'\,d\mu \\ & = & \int T(x)y'\,d\mu=
\langle\eta\big(T(x)\big),y'\rangle
\end{eqnarray*}
for all $x\in X_1(\mu)$ and $y'\in Y_1(\nu)'$ and so Theorem
\ref{THM: GeneralFactor}.(b) holds for $\xi^*=\widetilde{\eta}(g)$.
\end{proof}

\begin{remark}\label{REM: StrongFactor}
Of course, the condition $y_2y_1'\in (Y_2\pi Y_1')_a$ for all $y_2\in Y_2(\nu)$ and $y_1'\in Y_1(\nu)'$
holds when $Y_2\pi Y_1'$ is $\sigma$-order continuous. But also this condition is obtained for instance if
any of $Y_2(\nu)$ or $Y_1(\nu)'$ is $\sigma$-order continuous. Indeed, suppose that $Y_2(\nu)$ is $\sigma$-order
continuous and take $y_2\in Y_2(\nu)=(Y_2)_a(\nu)$ and $y_1'\in Y_1(\nu)'$. For every $(A_n)\subset\Sigma$ such
that $A_n\downarrow$ with $\nu(\cap A_n)=0$, we have that
$$
\Vert y_2y_1'\chi_{A_n}\Vert_{Y_2\pi Y_1'}\le\Vert y_2\chi_{A_n}\Vert_{Y_2}\cdot\Vert y_1'\Vert_{Y_1'}\to0
$$
and so $y_2y_1'\in(Y_2\pi Y_1')_a$. We get the case when $Y_1(\nu)'$ is  $\sigma$-order continuous in a similar way.

\end{remark}

\begin{remark}\label{REM: StrongFactor2}
Note that if the $\sigma$-order continuous part $X_a(\mu)$ of
a~saturated Banach function
space $X(\mu)$ is also saturated then $\Vert x\Vert_X=\Vert
x\Vert_{X''}$ for all $x\in X_a(\mu)$. Indeed, for every $x\in
X_a(\mu)$ we have that $\Vert x\Vert_X=\Vert x\Vert_{X_a''}$ since
$X_a(\mu)$ is order semi-continuous and $\Vert
x\Vert_{X_a''}=\Vert x\Vert_{X''}$ since $X_a(\mu)'=X(\mu)'$ with
equal norms. Then, the norm in Corollary \ref{COR:
StrongFactor}.(b) can be computed as
\begin{eqnarray*}
\Big\Vert\sum_{i=1}^nS(hx_i)y_i'\Big\Vert_{Y_2\pi Y_1'}
& = & \Big\Vert\sum_{i=1}^nS(hx_i)y_i'\Big\Vert_{(Y_2\pi Y_1')''}=
\Big\Vert\sum_{i=1}^nS(hx_i)y_i'\Big\Vert_{\big(Y_2^{Y_1''}\big)'}
\\ & = & \sup_{f\in B_{Y_2^{Y_1''}}}\int\Big|f\sum_{i=1}^nS(hx_i)y_i'\Big|\,d\nu.
\end{eqnarray*}
\end{remark}


\section{Strong factorization involving Schauder basis}\label{SEC: StrongFactor}

Let $(\Omega,\Sigma,\mu)$, $(\Delta, \Gamma, \nu)$ be
$\sigma$-finite measure spaces, $X_1(\mu)$, $X_2(\mu)$,
$Y_1(\nu)$, $Y_2(\nu)$ saturated  Banach function spaces and $T\colon X_1(\mu)\to
Y_1(\nu)$, $S\colon X_2(\mu)\to Y_2(\nu)$ nontrivial continuous
linear operators. In this section we assume the existence of
a~Schauder basis $(\gamma_n)$ for $Y_1(\nu)'$ and denote by
$(\gamma_n^*)$ the sequence of coefficient functionals with
respect to this basis.

\begin{theorem}\label{THM: SchauderStrongFactor}
Assume that $Y_2^{Y_1''}$ is saturated and that any of $Y_2(\nu)$
or $Y_1(\nu)'$ is $\sigma$-order continuous. Given $h\in
X_1^{X_2}$, the following statements are equivalent{\rm:}
\begin{itemize}\setlength{\leftskip}{-3ex}\setlength{\itemsep}{.5ex}
\item[(a)] $T$ factors strongly through $S$ and $M_h$.

\item[(b)] There exists a constant $C>0$ such that the inequality
$$
\sum_{i=1}^n\int T(x_i)\gamma_i\,d\nu\le
C\Big\Vert\sum_{i=1}^nS(hx_i)\gamma_i\Big\Vert_{Y_2\pi Y_1'},
\quad\, n\in\mathbb{N}
$$
holds for every $x_1,...,x_n\in
X_1(\mu)$.
\end{itemize}
Moreover, if $Y_2(\nu)\subset Y_1(\nu)''$ and the functions $(\gamma_n)$ have pairwise
disjoint support, then the condition
\begin{itemize}\setlength{\leftskip}{-3ex}\setlength{\itemsep}{.5ex}
\item[(c)] There exists a constant $C>0$ such that the inequality
$$
\int T(x)\gamma_n\,d\nu\le C\int|S(hx)\gamma_n|\,d\nu, \quad\,
n\in \mathbb{N}
$$
holds for every  $x\in X_1(\mu)$ and
$n\ge1$.
\end{itemize}
implies (a)-(b). In the case when $Y_2^{Y_1''}=L^\infty(\nu)$, we have that
(c) is equivalent to (a)-(b).
\end{theorem}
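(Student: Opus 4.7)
The strategy is to deduce (a)~$\Leftrightarrow$~(b) from Corollary~\ref{COR: StrongFactor}, whose hypotheses are satisfied here: $Y_2^{Y_1''}$ is saturated by assumption, and the $\sigma$-order continuity of either $Y_2(\nu)$ or $Y_1(\nu)'$ ensures $y_2y_1'\in(Y_2\pi Y_1')_a$ for all $y_2,y_1'$ by Remark~\ref{REM: StrongFactor}. The implication (a)~$\Rightarrow$~(b) is then immediate by specializing the inequality of Corollary~\ref{COR: StrongFactor}(b) to $y_i'=\gamma_i$.

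For the converse (b)~$\Rightarrow$~(a), I plan to upgrade (b) to condition~(b) of Corollary~\ref{COR: StrongFactor} via a~Schauder basis expansion. Given $y_1',\ldots,y_n'\in Y_1(\nu)'$, set $v_i^{(N)}=\sum_{j=1}^N\gamma_j^*(y_i')\gamma_j$, which converges to $y_i'$ in $Y_1(\nu)'$, and define the auxiliary elements $w_j^{(N)}=\sum_{i=1}^n\gamma_j^*(y_i')\,x_i\in X_1(\mu)$. Applying hypothesis (b) to the system $(w_j^{(N)})_{j=1}^N$ and swapping the two finite sums via the linearity of $T$ and $S$ produces
$$
\sum_{i=1}^n\int T(x_i)\,v_i^{(N)}\,d\nu\leq C\,\Big\|\sum_{i=1}^n S(hx_i)\,v_i^{(N)}\Big\|_{Y_2\pi Y_1'}.
$$
The left-hand side tends to $\sum_i\int T(x_i)\,y_i'\,d\nu$ by continuity of the duality between $Y_1(\nu)$ and $Y_1(\nu)'$, and the right-hand side converges because, for each $i$, $\|S(hx_i)(v_i^{(N)}-y_i')\|_{Y_2\pi Y_1'}\leq\|S(hx_i)\|_{Y_2}\|v_i^{(N)}-y_i'\|_{Y_1'}\to 0$. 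Passing to the limit delivers condition~(b) of Corollary~\ref{COR: StrongFactor}, from which (a) follows. I expect this limiting argument, and in particular verifying convergence in the $\pi$-product norm, to be the main technical step.

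For the remaining implications, assume $Y_2(\nu)\subset Y_1(\nu)''$ and that the $\gamma_n$ have pairwise disjoint supports. Summing condition (c) over $i$ and using the disjointness to collapse the resulting sum of absolute values gives
$$
\sum_i\int T(x_i)\,\gamma_i\,d\nu\leq C\int\Big|\sum_i S(hx_i)\,\gamma_i\Big|\,d\nu.
$$
The hypothesis $Y_2(\nu)\subset Y_1(\nu)''$ is equivalent to $1\in Y_2^{Y_1''}=(Y_2\pi Y_1')'$, which yields the bound $\int|f|\,d\nu\leq\|1\|_{(Y_2\pi Y_1')'}\|f\|_{Y_2\pi Y_1'}$ for all $f\in Y_2\pi Y_1'$; applying this to the right-hand side above produces (b). Finally, when $Y_2^{Y_1''}=L^\infty(\nu)$, the converse (a)~$\Rightarrow$~(c) is immediate: the factorization supplies $g\in L^\infty(\nu)$ with $T(x)=gS(hx)$, whence $\int T(x)\,\gamma_n\,d\nu\leq\|g\|_\infty\int|S(hx)\,\gamma_n|\,d\nu$ for every $n$.
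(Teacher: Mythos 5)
Your proposal is correct and follows essentially the same route as the paper's proof: reduce (a)$\Leftrightarrow$(b) to Corollary \ref{COR: StrongFactor} via the partial-sum expansion $(y_i')^m=\sum_k\langle\gamma_k^*,y_i'\rangle\gamma_k$, regroup the double sum into elements $\sum_i\langle\gamma_k^*,y_i'\rangle x_i$ of $X_1(\mu)$, pass to the limit using the same two norm estimates, and handle (c) by summing over $i$, exploiting disjoint supports, and using the continuity of the inclusion $Y_2\pi Y_1'\subset L^1(\nu)$ (your phrasing via $1\in(Y_2\pi Y_1')'$ is the same fact). No gaps.
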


\begin{proof}
(a) $\Leftrightarrow$ (b) From Remark \ref{REM: StrongFactor}, we only have to prove that the condition
(b) of the present theorem implies the condition (b) of Corollary \ref{COR: StrongFactor}.
The converse implication follows by taking $y_i'=\gamma_i$. Let $x_1,...,x_n\in X_1(\mu)$
and $y_1',...,y_n'\in Y_1(\nu)'$. Fix $m\in\mathbb{N}$ and denote $(y_i')^m=\sum_{k=1}^m\langle\gamma_k^*,y_i'\rangle\,\gamma_k$.
It follows that
\begin{eqnarray}\label{EQ: SchauderStrongFactor}
\sum_{i=1}^n\int T(x_i)(y_i')^m\,d\nu & = &
\sum_{i=1}^n\sum_{k=1}^m\langle\gamma_k^*,y_i'\rangle\int
T(x_i)\gamma_k\,d\nu \nonumber \\
& = & \sum_{k=1}^m\int
\Big(\sum_{i=1}^n\langle\gamma_k^*,y_i'\rangle\,T(x_i)\Big)\gamma_k\,d\nu
\nonumber \\ & = &
\sum_{k=1}^m\int T\Big(\sum_{i=1}^n\langle\gamma_k^*,y_i'\rangle\,x_i\Big)\gamma_k\,d\nu \nonumber \\
& \le &
C\Big\Vert\sum_{k=1}^mS\Big(h\sum_{i=1}^n\langle\gamma_k^*,y_i'\rangle\,x_i\Big)\gamma_k\Big\Vert_{Y_2\pi Y_1'} \nonumber \\ & = &
C\Big\Vert\sum_{k=1}^m\sum_{i=1}^n\langle\gamma_k^*,y_i'\rangle\,S(hx_i)\gamma_k\Big\Vert_{Y_2\pi Y_1'}
\nonumber \\ & = &
C\Big\Vert\sum_{i=1}^nS(hx_i)(y_i')^m\Big\Vert_{Y_2\pi Y_1'}.
\end{eqnarray}
Since $(y_i')^m\to y_i'$ in $Y_1(\nu)'$ as $m\to\infty$ and
$$
\Big|\int zy_i'\,d\nu-\int z(y_i')^m\,d\nu\Big|=\Big|\int z\big(y_i'-(y_i')^m\big)\,d\nu\Big|\le\Vert z\Vert_{Y_1}\Vert y_i'-(y_i')^m\Vert_{Y_1'}
$$
for every $z\in Y_1(\nu)$, we have that $\sum_{i=1}^n\int T(x_i)(y_i')^m\,d\nu\to\sum_{i=1}^n\int T(x_i)y_i'\,d\nu$
as $m\to\infty$. On other hand, since
$$
\Vert zy_i'-z(y_i')^m\Vert_{Y_2\pi Y_1'}=\Vert z\big(y_i'-(y_i')^m\big)\Vert_{Y_2\pi Y_1'}\le\Vert z\Vert_{Y_2}\Vert y_i'-(y_i')^m\Vert_{Y_1'}
$$
for every $z\in Y_2(\nu)$, we have that $\sum_{i=1}^nS(hx_i)(y_i')^m\to\sum_{i=1}^nS(hx_i)y_i'$ in $Y_2\pi Y_1'$ as $m\to\infty$.
Then, taking limit as $m\to\infty$ in \eqref{EQ: SchauderStrongFactor}, we obtain
$$
\sum_{i=1}^n\int T(x_i)y_i'\,d\nu\le C\Big\Vert\sum_{i=1}^nS(hx_i)y_i'\Big\Vert_{Y_2\pi Y_1'}.
$$

Assume that $Y_2(\nu)\subset Y_1(\nu)''$ and that the functions $(\gamma_n)$ have pairwise disjoint support. Let us see that (c) implies (b).
The condition $Y_2(\nu)\subset Y_1(\nu)''$ is equivalent to $L^\infty(\nu)\subset Y_2^{Y_1''}=\big(Y_2\pi Y_1'\big)'$ and so
$Y_2\pi Y_1'\subset\big(Y_2\pi Y_1'\big)''\subset L^\infty(\nu)'=L^1(\nu)$. Denote by $K$ the continuity constant of the inclusion
$Y_2\pi Y_1'\subset L^1(\nu)$. For every $n\in\mathbb{N}$ and
$x_1,...,x_n\in X_1(\mu)$, noting that $\sum_{i=1}^n|S(hx_i)\gamma_i|=\big|\sum_{i=1}^nS(hx_i)\gamma_i\big|$ pointwise (as $(\gamma_k)$
have disjoint support), we have that
\begin{eqnarray*}
\sum_{i=1}^n\int T(x_i)\gamma_i\,d\nu & \le & C\sum_{i=1}^n\int|S(hx_i)\gamma_i|\,d\nu=C\int\Big|\sum_{i=1}^nS(hx_i)\gamma_i\Big|\,d\nu \\ & \le &
CK\Big\Vert \sum_{i=1}^nS(hx_i)\gamma_i\Big\Vert_{Y_2\pi Y_1'}.
\end{eqnarray*}
If moreover $L^\infty(\nu)=Y_2^{Y_1''}$ then (a) implies (c), as if $g\in Y_2^{Y_1''}$ is such that $T(x)=gS(hx)$ for all $x\in X_1(\mu)$,
it follows that
$$
\int T(x)\gamma_n\,d\nu=\int gS(hx)\gamma_n\,d\nu\le\int|gS(hx)\gamma_n|\,d\nu\le \Vert g\Vert_\infty\int |S(hx)\gamma_n|\,d\nu.
$$
\end{proof}

Now suppose that there is also a~Schauder basis $(\beta_n)$ for
$X_1(\mu)$ and denote by $(\beta_n^*)$ the sequence of its
coefficient functionals. Then, the equivalent inequalities for the
strong factorization can be relaxed.

\begin{theorem}\label{THM: SchauderStrongFactor2}
Assume that $Y_2^{Y_1''}$ is saturated and that any of $Y_2(\nu)$
or $Y_1(\nu)'$ is $\sigma$-order continuous. Given $h\in
X_1^{X_2}$, the following statements are equivalent{\rm:}
\begin{itemize}\setlength{\leftskip}{-3ex}\setlength{\itemsep}{.5ex}
\item[(a)] $T$ factors strongly through $S$ and $M_h$.

\item[(b)] There exists $g\in Y_2^{Y_1''}$ such that
$T(\beta_n)=gS(h\beta_n)$ for each $n\in\mathbb{N}$.

\item[(c)] There exists a constant $C>0$ such that the inequality
$$
\sum_{i=1}^n\sum_{j=1}^mr_{ij}\int T(\beta_j)\gamma_i\,d\nu\le
C\Big\Vert\sum_{i=1}^n\sum_{j=1}^mr_{ij}S(h\beta_j)\gamma_i\Big\Vert_{Y_2\pi
Y_1'}, \quad\, n,m\in\mathbb{N}
$$
holds for every
$(r_{ij})\subset B_{\ell^\infty}$.
\end{itemize}
Moreover, if $Y_2(\nu)\subset Y_1(\nu)''$  and the functions $(\gamma_n)$ have pairwise disjoint support,
then the condition
\begin{itemize}\setlength{\leftskip}{-3ex}\setlength{\itemsep}{.5ex}
\item[(d)] There exists a constant $C>0$ such that the inequality
$$
\sum_{j=1}^mr_j\int T(\beta_j)\gamma_n\,d\nu\le C\int\Big|\sum_{j=1}^mr_jS(h\beta_j)\gamma_n\Big|\,d\nu
$$
holds for every  $n,m\in\mathbb{N}$ and $(r_j)\subset B_{\ell^\infty}$.
\end{itemize}
implies (a)-(c). In the case when $L^\infty(\nu)=Y_2^{Y_1''}$, we have that (d) is equivalent to (a)-(c).
\end{theorem}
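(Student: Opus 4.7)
The plan is to reduce all equivalences to Theorem \ref{THM: SchauderStrongFactor} by using the Schauder basis $(\beta_n)$ of $X_1(\mu)$ to pass between arbitrary elements of $X_1(\mu)$ and basis elements. The equivalence (a) $\Leftrightarrow$ (b) will be the easiest: (a) $\Rightarrow$ (b) is immediate by evaluating the factorization at each $\beta_n$, and for the converse both $i\circ T$ and $M_g \circ S \circ M_h$ are continuous linear maps from $X_1(\mu)$ into $Y_1(\nu)''$ that agree on the Schauder basis $(\beta_n)$, hence on all of $X_1(\mu)$.

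For (a) $\Leftrightarrow$ (c), the forward direction would use the identification $(Y_2\pi Y_1')' = Y_2^{Y_1''}$: substituting $T(\beta_j) = gS(h\beta_j)$ turns the left-hand side of (c) into $\int g\,\big(\sum_{ij} r_{ij}S(h\beta_j)\gamma_i\big)\,d\nu$, which is bounded by $\Vert g\Vert_{Y_2^{Y_1''}}$ times the required $Y_2\pi Y_1'$-norm. For the converse, I would first note that by scalar homogeneity the inequality in (c) extends from $B_{\ell^\infty}$ to arbitrary real arrays. Given $x_1,\dots,x_n \in X_1(\mu)$, I would expand $x_i = \sum_j \langle \beta_j^*, x_i\rangle \beta_j$, truncate to $x_i^m = \sum_{j=1}^m \langle \beta_j^*, x_i\rangle \beta_j$, and apply (the extended) (c) with $r_{ij} = \langle \beta_j^*, x_i\rangle$ for $j \le m$ to obtain
$$
\sum_{i=1}^n \int T(x_i^m)\gamma_i\,d\nu \le C\Big\Vert \sum_{i=1}^n S(hx_i^m)\gamma_i\Big\Vert_{Y_2\pi Y_1'}.
$$
Continuity of $T$, $M_h$, $S$, and of the multiplication $z\mapsto z\gamma_i$ from $Y_2(\nu)$ into $Y_2\pi Y_1'$ (with norm bounded by $\Vert\gamma_i\Vert_{Y_1'}$) then lets me pass to the limit $m\to\infty$, recovering condition (b) of Theorem \ref{THM: SchauderStrongFactor} and thus (a).

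For the statements involving (d), I would assume $Y_2(\nu) \subset Y_1(\nu)''$ together with the pairwise disjointness of supports of $(\gamma_n)$, which as in the proof of Theorem \ref{THM: SchauderStrongFactor} yields the continuous inclusion $Y_2\pi Y_1' \subset L^1(\nu)$. To derive (c) from (d), I would fix $(r_{ij}) \in B_{\ell^\infty}$, apply (d) with $r_j = r_{ij}$ for each fixed $i$, and sum over $i$: the disjoint supports collapse $\sum_i \big|\sum_j r_{ij}S(h\beta_j)\gamma_i\big|$ to $\big|\sum_{ij} r_{ij}S(h\beta_j)\gamma_i\big|$ pointwise, and the $L^1$-inclusion upgrades the $L^1$-integral on the right to the $Y_2\pi Y_1'$-norm. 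When $L^\infty(\nu) = Y_2^{Y_1''}$, condition (a) supplies $g \in L^\infty(\nu)$ with $T(x) = gS(hx)$, and the trivial estimate $\int g f\,d\nu \le \Vert g\Vert_\infty \int |f|\,d\nu$ applied to $f = \sum_j r_j S(h\beta_j)\gamma_n$ yields (d). The main technical point I expect is the truncation-and-limit passage in (c) $\Rightarrow$ (a), where one must control both the $Y_1(\nu)$-convergence of $T(x_i^m)$ and the $Y_2\pi Y_1'$-convergence of $S(hx_i^m)\gamma_i$; everything else reduces to direct use of the duality $(Y_2\pi Y_1')' = Y_2^{Y_1''}$ and the disjointness hypothesis.
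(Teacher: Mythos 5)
Your proposal is correct and follows essentially the same route as the paper: the normalization-by-scalar-homogeneity step (the paper divides by $\alpha=\max|\langle\beta_j^*,x_i\rangle|$ to land in $B_{\ell^\infty}$), the truncation $x_i^m=\sum_{j=1}^m\langle\beta_j^*,x_i\rangle\beta_j$ with passage to the limit to recover condition (b) of Theorem \ref{THM: SchauderStrongFactor}, and the disjoint-support collapse together with the inclusion $Y_2\pi Y_1'\subset L^1(\nu)$ for the statements involving (d) all match the paper's argument. The one genuine variation is your direct proof of (b) $\Rightarrow$ (a): the paper closes the cycle as (a) $\Rightarrow$ (b) $\Rightarrow$ (c) $\Rightarrow$ (a), whereas you observe that $i\circ T$ and $M_g\circ S\circ M_h$ are continuous linear maps into $Y_1(\nu)''$ that agree on the Schauder basis $(\beta_n)$ and hence everywhere --- a clean shortcut that makes (a) $\Leftrightarrow$ (b) independent of the norm-inequality machinery.
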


\begin{proof}
(a) $\Rightarrow$ (b) Let $g\in Y_2^{Y_1''}$ be such that $T(x)=gS(hx)$ for all $x\in X_1(\mu)$. In particular,
for $x=\beta_n$ we obtain (b).

(b) $\Rightarrow$ (c) Since $g\in Y_2^{Y_1''}=(Y_2\pi Y_1')'$, for every $n,m\in\mathbb{N}$ and $(r_{ij})\subset B_{\ell^\infty}$,
it follows that
\begin{eqnarray*}
\sum_{i=1}^n\sum_{j=1}^mr_{ij}\int T(\beta_j)\gamma_i\,d\nu & = & \sum_{i=1}^n\sum_{j=1}^mr_{ij}\int gS(h\beta_j)\gamma_i\,d\nu \\
& = & \int g\sum_{i=1}^n\sum_{j=1}^mr_{ij} S(h\beta_j)\gamma_i\,d\nu \\
& \le & \int \Big|g\sum_{i=1}^n\sum_{j=1}^mr_{ij} S(h\beta_j)\gamma_i\Big|\,d\nu \\
& \le & \Vert g\Vert_{(Y_2\pi Y_1')'}\Big\Vert\sum_{i=1}^n\sum_{j=1}^mr_{ij}S(h\beta_j)\gamma_i\Big\Vert_{Y_2\pi Y_1'}.
\end{eqnarray*}

(c) $\Rightarrow$ (a)
Let us show that the condition (b) of Theorem \ref{THM: SchauderStrongFactor} holds.
Let $x_1,...,x_n\in X_1(\mu)$ which can be assumed to be non-null. Fix $m\in\mathbb{N}$ large enough such
that $(x_i)^m=\sum_{j=1}^m\langle\beta_j^*,x_i\rangle\,\beta_j\not=0$ and denote
$\alpha=\max_{i=1,..,n \atop j=1,...,m}|\langle\beta_j^*,x_i\rangle|
$. By taking
$r_{ij}=\frac{\langle\beta_j^*,x_i\rangle}{\alpha}$ it follows that
\begin{eqnarray}\label{EQ: SchauderStrongFactor2}
\sum_{i=1}^n\int T\big((x_i)^m\big)\gamma_i\,d\nu & = & \sum_{i=1}^n\sum_{j=1}^m\langle\beta_j^*,x_i\rangle\int T(\beta_j)\gamma_i\,d\nu
\nonumber \\ & = & \alpha\sum_{i=1}^n\sum_{j=1}^mr_{ij}\int T(\beta_j)\gamma_i\,d\nu
\nonumber \\ & \le &
\alpha\,C\,\Big\Vert\sum_{i=1}^n\sum_{j=1}^mr_{ij}S(h\beta_j)\gamma_i\Big\Vert_{Y_2\pi Y_1'}
\nonumber \\ & = &
C\,\Big\Vert\sum_{i=1}^n\sum_{j=1}^m\langle\beta_j^*,x_i\rangle S(h\beta_j)\gamma_i\Big\Vert_{Y_2\pi Y_1'} \nonumber \\ & = &
C\,\Big\Vert\sum_{i=1}^nS\big(h(x_i)^m\big)\gamma_i\Big\Vert_{Y_2\pi Y_1'}.
\end{eqnarray}
Denoting by $\Vert T \Vert$ the operator norm of $T$, since $(x_i)^m\to x_i$ in $X_1(\mu)$ as $m\to\infty$ and
\begin{eqnarray*}
\Big|\int T(x_i)z\,d\nu-\int T\big((x_i)^m\big)z\,d\nu\Big| & = & \Big|\int T\big(x_i-(x_i)^m\big)z\,d\nu\Big| \\ & \le &
\Vert z\Vert_{Y_1'}\,\Vert T\big(x_i-(x_i)^m\big)\Vert_{Y_1} \\ & \le & \Vert z\Vert_{Y_1'}\,\Vert T\Vert\,\Vert x_i-(x_i)^m\Vert_{X_1}
\end{eqnarray*}
for every $z\in Y_1(\nu)'$, we have that $\sum_{i=1}^n\int T\big((x_i)^m\big)\gamma_i\,d\nu\to\sum_{i=1}^n\int T(x_i)\gamma_i\,d\nu$
as $m\to\infty$. On other hand, denoting by $\Vert S \Vert$ the operator norm of $S$, since
\begin{eqnarray*}
\Vert S(hx_i)z-S\big(h(x_i)^m\big)z\Vert_{Y_2\pi Y_1'} & = & \Vert S\big(h(x_i-(x_i)^m)\big)z\Vert_{Y_2\pi Y_1'}
\\ & \le & \Vert z\Vert_{Y_1'}\,\Vert  S\big(h(x_i-(x_i)^m)\big)\Vert_{Y_2} \\ & \le & \Vert z\Vert_{Y_1'}\,
\Vert S\Vert\,\Vert h(x_i-(x_i)^m)\Vert_{X_2}
\\ & \le & \Vert z\Vert_{Y_1'}\,\Vert S\Vert\,\Vert h\Vert_{X_1^{X_2}}\,\Vert x_i-(x_i)^m\Vert_{X_1}
\end{eqnarray*}
for every $z\in Y_1(\nu)'$, we have that $\sum_{i=1}^nS\big(h(x_i)^m\big)\gamma_i\to\sum_{i=1}^nS\big(hx_i\big)\gamma_i$ in $Y_2\pi Y_1'$
as $m\to\infty$.
Then, Taking limit as $m\to\infty$ in \eqref{EQ: SchauderStrongFactor2}, we obtain
$$
\sum_{i=1}^n\int T(x_i)\gamma_i\,d\nu\le C\,\Big\Vert\sum_{i=1}^nS(hx_i)\gamma_i\Big\Vert_{Y_2\pi Y_1'}.
$$

Assume that $Y_2(\nu)\subset Y_1(\nu)''$ and that the functions $(\gamma_n)$ have pairwise disjoint support.
We have already noted that in this case $Y_2\pi Y_1'\subset L^1(\nu)$ (denote by $K$ its continuity constant)
and $\sum_{i=1}^n|f_i\gamma_i|=\big|\sum_{i=1}^nf_i\gamma_i\big|$ pointwise for every $n\in\mathbb{N}$ and
$(f_i)\subset L^0(\nu)$. Let us see that (d) implies (c). For every $n,m\in\mathbb{N}$ and $(r_{ij})\subset B_{\ell^\infty}$
we have that
\begin{eqnarray*}
\sum_{i=1}^n\sum_{j=1}^mr_{ij}\int T(\beta_j)\gamma_i\,d\nu & \le & C\sum_{i=1}^n\int\Big|\sum_{j=1}^mr_{ij}S(h\beta_j)\gamma_i\Big|\,d\nu \\ & = &
C\int\Big|\sum_{i=1}^n\sum_{j=1}^mr_{ij}S(h\beta_j)\gamma_i\Big|\,d\nu \\ & \le &
CK\Big\Vert\sum_{i=1}^n\sum_{j=1}^mr_{ij}S(h\beta_j)\gamma_i\Big\Vert_{Y_2\pi Y_1'}.
\end{eqnarray*}
If moreover $L^\infty(\nu)=Y_2^{Y_1''}$ then (a) implies (d), as if $g\in Y_2^{Y_1''}$ is such that $T(x)=gS(hx)$
for all $x\in X_1(\mu)$, it follows that
\begin{eqnarray*}
\sum_{j=1}^mr_j\int T(\beta_j)\gamma_n\,d\nu & = & \sum_{j=1}^mr_j\int gS(h\beta_j)\gamma_n\,d\nu=\int g\sum_{j=1}^mr_jS(h\beta_j)\gamma_n\,d\nu \\
& \le & \int \Big|g\sum_{j=1}^mr_jS(h\beta_j)\gamma_n\Big|\,d\nu\le \Vert g\Vert_\infty\int \Big|\sum_{j=1}^mr_jS(h\beta_j)\gamma_n\Big|\,d\nu.
\end{eqnarray*}
\end{proof}


\section{Examples: the Fourier and Ces\`aro operators}

In this section we show how the results obtained in the previous one can be applied in concrete contexts. In particular, we will deal with the Fourier operator acting in different weighted $L^p$-spaces, we will show factorization through infinite matrices and, as a special case, we will analyze the case provided by the Ces\`aro operator.

\subsection{Strong factorization through the Fourier operator}\label{SEC: FourierOperator}

Consider the measure space given by the interval $\mathbb{T}=[-\pi,\pi]$,
its Borel $\sigma$-algebra and the Lebesgue measure $m$ and denote by $(\phi_n)$
the real trigonometric system on $\mathbb{T}$, that is,
$$
\phi_n(x)=\left\{\renewcommand{\arraystretch}{2}\begin{array}{ll}
{\displaystyle\frac{1}{\sqrt{2\pi}}} & \textnormal{if } \, n=1 \\
{\displaystyle\frac{\cos(kx)}{\sqrt{\pi}}} & \textnormal{if } \, n=2k \\
{\displaystyle\frac{\sin(kx)}{\sqrt{\pi}}} & \textnormal{if } \, n=2k+1
\end{array}\right..
$$
Note that $\int_{-\pi}^\pi \phi_i(x)\phi_j(x)\,dx=0$ if $i\not=j$ and $\int_{-\pi}^\pi \phi_i(x)\phi_i(x)\,dx=1$.
Each function $f\in L^1(m)$ is associated to its Fourier series $S(f)=\sum_{n\ge1}a_n\phi_n$
where $a_n=\int_\mathbb{T} f\phi_n\,dm$. If $f\in L^r(m)$ for $1<r<\infty$ then $S(f)$ converges to $f$ in $L^r(m)$
and so $(\phi_n)$ is a Schauder basis on $L^r(m)$.

Let $\mathcal{F}$ be the Fourier operator  defined by
\[
\mathcal{F}(f)=\Big(\int_\mathbb{T} f\phi_n\,dm\Big), \quad\, f\in
L^1(m).
\]
 The
Hausdorff-Young inequality (see for instance
\cite[(8.5.7)]{hardy-littlewood-polya}) guarantees that
$$
\mathcal{F}\colon L^r(m)\to\ell^{r'}
$$
is a well defined
continuous operator for every $1<r\le2$.

Fix $1<r\le2$, $r\le p<\infty$, $1<q\le\infty$ and let $T\colon L^p(m)\to\ell^q$ be a non-trivial continuous linear operator.
We have that $(\phi_n)$ is a Schauder basis for $L^p(m)$ (as $1<p<\infty$) and $(e^n)$ is a Schauder basis for $(\ell^q)'$ (as $q>1$).
Also, $L^p(m)\subset L^r(m)$ (as $r\le p$) and so $\chi_\mathbb{T}\in (L^p)^{L^r}$.

\begin{proposition}\label{PROP: FourierFactor}
The following statements are equivalent{\rm:}
\begin{itemize}\setlength{\leftskip}{-3ex}\setlength{\itemsep}{.5ex}
\item[(a)] $T$ factors strongly through $\mathcal{F}$, that is, there exists $g\in\ell^{s_{r'\hspace{-.5mm}q}}$ such that
$$
\xymatrix{
L^p(m) \ \ar@{.>}[d]_{i} \ar[r]^(.6){T} & \ \ell^q  \\
L^r(m) \ \ar[r]^(0.6){\mathcal{F}} & \ \ell^{r'} \ \ar@{.>}[u]_{M_g}}
$$
$($see \eqref{EQ: spq-Def} in the preliminaries for the definition
of $s_{r'\hspace{-.5mm}q})$.

\item[(b)] $T(\phi_n)_i=0$ for all $i\not=n$ and $\big(T(\phi_i)_i\big)\in\ell^{s_{r'\hspace{-.5mm}q}}$.

\item[(c)] There exists a constant $C>0$ such that the inequality
$$
\sum_{i=1}^n\sum_{j=1}^mr_{ij}T(\phi_j)_i \le
C\Big(\sum_{i=1}^{\min\{n,m\}}|r_{ii}|^{s_{r'\hspace{-.5mm}q}'}\Big)^{\frac{1}{s_{r'\hspace{-.5mm}q}'}},
\quad\, n,m\in\mathbb{N}
$$
holds for every
$(r_{ij})\subset B_{\ell^\infty}$.
\end{itemize}
Moreover, in the case when $r'\le q$, the conditions (a)-(c) are equivalent to
\begin{itemize}\setlength{\leftskip}{-3ex}\setlength{\itemsep}{.5ex}
\item[(d)] There exists a constant $C>0$ such that the inequality
$$
\sum_{j=1}^mr_jT(\phi_j)_n\le C\left\{\begin{array}{ll}
|r_n| & \textnormal{if } n\le m \\ 0 & \textnormal{if } n>m
\end{array}\right.
$$
holds for  each
$n,m\in\mathbb{N}$ and all $(r_j)\subset
B_{\ell^\infty}$.
\end{itemize}
\end{proposition}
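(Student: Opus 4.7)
The plan is to apply Theorem \ref{THM: SchauderStrongFactor2} under the identifications $X_1(\mu)=L^p(m)$, $X_2(\mu)=L^r(m)$, $Y_1(\nu)=\ell^q$, $Y_2(\nu)=\ell^{r'}$, $S=\mathcal{F}$, $h=\chi_\mathbb{T}$, with Schauder bases $(\beta_n)=(\phi_n)$ for $X_1(\mu)$ and $(\gamma_n)=(e^n)$ for $Y_1(\nu)'=\ell^{q'}$. First I would verify that the hypotheses of that theorem are in place: the trigonometric system is a Schauder basis of $L^p(m)$ because $1<p<\infty$; the unit vectors form a Schauder basis of $\ell^{q'}$ because $q'<\infty$; the space $Y_2^{Y_1''}=(\ell^{r'})^{\ell^q}=\ell^{s_{r'q}}$ is saturated by \eqref{EQ: spq-Def}; and $\ell^{q'}=Y_1(\nu)'$ is $\sigma$-order continuous. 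Moreover, $(\gamma_n)=(e^n)$ obviously has pairwise disjoint support.

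The computational engine of the argument is the identity $\mathcal{F}(\phi_j)=e^j$, which follows at once from the orthonormality relations of the trigonometric system. Feeding this into Theorem \ref{THM: SchauderStrongFactor2}(b) yields $T(\phi_n)=g\,e^n$ with $g\in\ell^{s_{r'q}}$, which is exactly (b) of the proposition once one reads off $g_n=T(\phi_n)_n$. For condition (c), the same identity collapses the double sum $\sum_{i,j}r_{ij}\,S(h\phi_j)\gamma_i$ to $\sum_{i=1}^{\min\{n,m\}}r_{ii}\,e^i$, so the right-hand side of Theorem \ref{THM: SchauderStrongFactor2}(c) reduces to the $Y_2\pi Y_1'$-norm of a finitely supported sequence of unit vectors.

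The essential technical step is therefore the evaluation of that norm. I would invoke Remark \ref{REM: StrongFactor2} together with the identification $(Y_2\pi Y_1')'=Y_2^{Y_1''}=\ell^{s_{r'q}}$: since $\ell^{q'}$ is $\sigma$-order continuous, every product $y_2y_1'$ lies in $(Y_2\pi Y_1')_a$, so $(Y_2\pi Y_1')_a$ is saturated and the norms of $Y_2\pi Y_1'$ and $(Y_2\pi Y_1')''$ agree on it; as it contains all finite combinations of the $e^i$, one obtains $\|\sum_i r_{ii}e^i\|_{Y_2\pi Y_1'}=\|\sum_i r_{ii}e^i\|_{\ell^{s_{r'q}'}}=\bigl(\sum_i|r_{ii}|^{s_{r'q}'}\bigr)^{1/s_{r'q}'}$, matching (c). The equivalence (a)$\Leftrightarrow$(b)$\Leftrightarrow$(c) then falls out of Theorem \ref{THM: SchauderStrongFactor2}.

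Finally, for the equivalence with (d), I would observe that $Y_2(\nu)\subset Y_1(\nu)''$ amounts to $\ell^{r'}\subset\ell^q$, i.e.\ $r'\le q$, and under this hypothesis \eqref{EQ: spq-Def} forces $s_{r'q}=\infty$, so $L^\infty(\nu)=\ell^\infty=\ell^{s_{r'q}}=Y_2^{Y_1''}$. Thus both extra hypotheses required by the $(d)$-part of Theorem \ref{THM: SchauderStrongFactor2} become available exactly when $r'\le q$. Substituting $S(h\phi_j)\gamma_n=\delta_{jn}e^n$ and $\int|r_n e^n|\,d\nu=|r_n|$ into that condition produces the inequality (d) of the proposition, where the case $n>m$ makes the right-hand side vanish. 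I expect the main obstacle to be the bookkeeping in this norm identification, namely checking that all the saturation and Fatou-type hypotheses line up so that the chain $\|\cdot\|_{Y_2\pi Y_1'}=\|\cdot\|_{(Y_2\pi Y_1')''}=\|\cdot\|_{\ell^{s_{r'q}'}}$ is valid on the span of the unit vectors, which is what fixes the precise exponent $s_{r'q}'$ appearing in (c).
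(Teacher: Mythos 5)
Your proposal is correct and follows essentially the same route as the paper: it reduces the statement to Theorem~\ref{THM: SchauderStrongFactor2} with the identifications $X_1=L^p(m)$, $X_2=L^r(m)$, $Y_1=\ell^q$, $Y_2=\ell^{r'}$, $h=\chi_{\mathbb T}$, uses the orthonormality identity $\mathcal F(\phi_n)=e^n$ to translate conditions (b)--(d), and evaluates the $Y_2\pi Y_1'$-norm via Remark~\ref{REM: StrongFactor2} and $\big(\ell^{r'}\pi(\ell^q)'\big)''=\ell^{s_{r'\hspace{-.5mm}q}'}$, including the observation that $r'\le q$ forces $s_{r'\hspace{-.5mm}q}=\infty$ for the equivalence with (d). No substantive differences from the paper's argument.
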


\begin{proof}
Note that both $\ell^{r'}$ and $(\ell^q)'$ are $\sigma$-order continuous (as $r,q>1$)
and that $(\ell^{r'})^{(\ell^q)''}=(\ell^{r'})^{\ell^q}=\ell^{s_{r'\hspace{-.5mm}q}}$
where  $s_{r'\hspace{-.5mm}q}$ is defined as in \eqref{EQ: spq-Def}.
For the equivalence among (a), (b) and (c), let us see that conditions (b) and (c) are
just respectively conditions (b) and (c) of Theorem \ref{THM: SchauderStrongFactor2}
rewritten for $X_1(\mu)=L^p(m)$, $X_2(\mu)=L^r(m)$, $Y_1(\nu)=\ell^q$ ($\nu$ being the
counting measure $\lambda$ on $\mathbb{N}$), $Y_2(\nu)=\ell^{r'}$, $S=\mathcal{F}$,
$h=\chi_\mathbb{T}$, $(\beta_n)=(\phi_n)$ and $(\gamma_n)=(e^n)$.

(b) $\Rightarrow$ Theorem \ref{THM: SchauderStrongFactor2}.(b).
Take $g=\big(T(\phi_i)_i\big)\in\ell^{s_{r'\hspace{-.5mm}q}}$.
Then, for every $n,i\in\mathbb{N}$ we have that $T(\phi_n)_i=T(\phi_i)_i\mathcal{F}(\phi_n)_i=g_i\mathcal{F}(\phi_n)_i$ and so
$T(\phi_n)=g\mathcal{F}(\phi_n)$.

Theorem \ref{THM: SchauderStrongFactor2}.(b) $\Rightarrow$ (b). Let $g\in\ell^{s_{r'\hspace{-.5mm}q}}$ be such that
$T(\phi_n)=g\mathcal{F}(\phi_n)$ for all $n\in\mathbb{N}$. Then
$$
T(\phi_n)_i=g_i\mathcal{F}(\phi_n)_i=\left\{\begin{array}{ll} g_i & \textnormal{if } i=n \\ 0 & \textnormal{if } i\not=n
\end{array}\right.
$$
and so $\big(T(\phi_i)_i\big)=g\in\ell^{s_{r'\hspace{-.5mm}q}}$.

(c) $\Leftrightarrow$ Theorem \ref{THM:
SchauderStrongFactor2}.(c). From Remark \ref{REM: StrongFactor2}
and noting that $\big(\ell^{r'}\pi(\ell^q)'\big)''
=\big((\ell^{r'})^{(\ell^q)''}\big)'=(\ell^{s_{r'\hspace{-.5mm}q}})'
=\ell^{s_{r'\hspace{-.5mm}q}'}$ with equals norms and
$s_{r'\hspace{-.5mm}q}'<\infty$ (as $s_{r'\hspace{-.5mm}q}>1$),
for  each  $n,m\in\mathbb{N}$
and  all $(r_{ij})\subset B_{\ell^\infty}$ it
follows that
\begin{eqnarray*}
\Big\Vert\sum_{i=1}^n\sum_{j=1}^mr_{ij}\mathcal{F}(\phi_j)e^i\Big\Vert_{\ell^{r'}\pi(\ell^q)'} &
= & \Big\Vert\sum_{i=1}^n\sum_{j=1}^mr_{ij}\mathcal{F}(\phi_j)e^i\Big\Vert_{(\ell^{r'}\pi(\ell^q)')''} \\
& = & \Big\Vert\sum_{i=1}^n\sum_{j=1}^mr_{ij}\mathcal{F}(\phi_j)e^i\Big\Vert_{\ell^{s_{r'\hspace{-.5mm}q}'}}
\\ & = & \Big\Vert\sum_{i=1}^n\sum_{j=1}^mr_{ij}\mathcal{F}(\phi_j)_ie^i\Big\Vert_{\ell^{s_{r'\hspace{-.5mm}q}'}} \\
& = & \Big(\sum_{i=1}^n\Big|\sum_{j=1}^mr_{ij}\mathcal{F}(\phi_j)_i\Big|^{s_{r'\hspace{-.5mm}q}'}\Big)^{\frac{1}{s_{r'\hspace{-.5mm}q}'}}
\\ & = & \Big(\sum_{i=1}^{\min\{n,m\}}|r_{ii}|^{s_{r'\hspace{-.5mm}q}'}\Big)^{\frac{1}{s_{r'\hspace{-.5mm}q}'}}
\end{eqnarray*}
and
$$
\sum_{i=1}^n\sum_{j=1}^mr_{ij}\int T(\phi_j)e^i\,d\lambda=\sum_{i=1}^n\sum_{j=1}^mr_{ij}T(\phi_j)_i.
$$

In the case when $r'\le q$ we have that $s_{r'\hspace{-.5mm}q}=\infty$ and so $(\ell^{r'})^{(\ell^q)''}=\ell^\infty$.
Then (d) is equivalent to (a)-(c) as (d) is to rewrite condition (d) of Theorem \ref{THM: SchauderStrongFactor2}. Indeed,
$$
\sum_{j=1}^mr_j\int T(\phi_j)e^n\,d\lambda=\sum_{j=1}^mr_jT(\phi_j)_n
$$
and
$$
\int\Big|\sum_{j=1}^mr_j\mathcal{F}(\phi_j)e^n\Big|\,d\lambda=\Big|\sum_{j=1}^mr_j\mathcal{F}(\phi_j)_n\Big|=\left\{\begin{array}{ll}
|r_n| & \textnormal{if } n\le m \\ 0 & \textnormal{if } n>m
\end{array}\right..
$$

\end{proof}


\subsection{Strong factorization for infinite matrices and the Ces\`aro operator}\label{SEC: Matrix}

Consider the measure space
$(\mathbb{N},\mathcal{P}(\mathbb{N}),\lambda)$ with $\lambda$
being the counting measure on $\mathbb{N}$. Let $X_1(\lambda)$,
$X_2(\lambda)$, $Y_1(\lambda)$, $Y_2(\lambda)$ be saturated
 Banach function spaces in
which $(e^n)$ is a~Schauder basis and $T\colon X_1(\lambda)\to
Y_1(\lambda)$, $S\colon X_2(\lambda)\to Y_2(\lambda)$ be
nontrivial continuous linear operators. Then, the operators $T$
and $S$ can be described by infinite matrices $(a_{ij})$ and
$(b_{ij})$ respectively, namely $a_{ij}=T(e^j)_i$ and
$b_{ij}=S(e^j)_i$. We also require that $(e^n)$ is a Schauder
basis for $Y_1(\lambda)'$.

\begin{proposition}\label{PROP: MatrixStrongFactor}
Assume that $Y_2^{Y_1''}$ is saturated and that any of
$Y_2(\lambda)$ or $Y_1(\lambda)'$ is $\sigma$-order continuous.
Given $h\in X_1^{X_2}$, the following statements are
equivalent{\rm:}
\begin{itemize}\setlength{\leftskip}{-3ex}\setlength{\itemsep}{.5ex}
\item[(a)] $T$ factors strongly through $S$ and $M_h$.

\item[(b)] There exists $g\in Y_2^{Y_1''}$ such that $\frac{a_{ij}}{b_{ij}}=g_ih_j$
whenever $b_{ij}\not=0$ and $a_{ij}=0$ whenever $b_{ij}=0$.

\item[(c)] There exists a constant $C>0$ such that the inequality
$$
\sum_{i=1}^n\sum_{j=1}^mr_{ij}a_{ij} \le
C\Big\Vert\sum_{i=1}^n\Big(\sum_{j=1}^mh_jr_{ij}b_{ij}\Big)e^i\Big\Vert_{Y_2\pi
Y_1'}, \quad\, n,m\in\mathbb{N}
$$
holds for every
$(r_{ij})\subset B_{\ell^\infty}$.
\end{itemize}
Moreover, if $Y_2(\lambda)\subset Y_1(\lambda)''$, then the condition
\begin{itemize}\setlength{\leftskip}{-3ex}\setlength{\itemsep}{.5ex}
\item[(d)] There exists a constant $C>0$ such that the inequality
$$
\sum_{j=1}^mr_ja_{nj}\le C\Big|\sum_{j=1}^mh_jr_jb_{nj}\Big|,
\quad\, n,m\in\mathbb{N}
$$
holds for every
$(r_j)\subset B_{\ell^\infty}$,
\end{itemize}
implies (a)-(c). In the case when $\ell^\infty=Y_2^{Y_1''}$, we have that (d) is equivalent to (a)-(c).
\end{proposition}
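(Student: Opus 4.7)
The plan is to apply Theorem \ref{THM: SchauderStrongFactor2} in the sequence-space setting with $\beta_n = e^n$ (Schauder basis of $X_1(\lambda)$) and $\gamma_n = e^n$ (Schauder basis of $Y_1(\lambda)'$), and then to translate each clause there into a statement about the matrix entries $a_{ij} = T(e^j)_i$ and $b_{ij} = S(e^j)_i$. A key observation making this translation clean is that the vectors $e^n$ have pairwise disjoint supports (each $e^n$ is supported on $\{n\}$), so the additional hypothesis needed to invoke the part of Theorem \ref{THM: SchauderStrongFactor2} concerning (d) is automatic.

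For (a)$\Leftrightarrow$(b), I would expand the identity $T(e^j) = g\, S(h e^j)$ of Theorem \ref{THM: SchauderStrongFactor2}.(b) along the basis: since $S(h e^j) = h_j S(e^j) = h_j \sum_i b_{ij}e^i$, this equation is equivalent to $a_{ij} = g_i h_j b_{ij}$ for all $i,j$. When $b_{ij}\ne 0$ this gives $a_{ij}/b_{ij} = g_i h_j$, and when $b_{ij}=0$ it forces $a_{ij}=0$, which is precisely condition (b) of the proposition. Conversely, those two cases combine to recover $a_{ij}=g_i h_j b_{ij}$ in all situations.

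For (a)$\Leftrightarrow$(c), the relevant computations are $\int T(\beta_j)\gamma_i\, d\lambda = a_{ij}$ and $S(h\beta_j)\gamma_i = h_j b_{ij} e^i$, so that
$$
\sum_{i=1}^n\sum_{j=1}^m r_{ij} S(h\beta_j)\gamma_i \;=\; \sum_{i=1}^n \Big(\sum_{j=1}^m h_j r_{ij} b_{ij}\Big) e^i.
$$
Substituting into condition (c) of Theorem \ref{THM: SchauderStrongFactor2} gives condition (c) of the proposition verbatim. For the final statement, the disjoint-support hypothesis in Theorem \ref{THM: SchauderStrongFactor2} holds as noted, so its conclusion is applicable; and since $\sum_j r_j S(h\beta_j)\gamma_n = \big(\sum_j h_j r_j b_{nj}\big)e^n$ is supported at the single point $n$, its $L^1$-norm equals $|\sum_j h_j r_j b_{nj}|$, which transcribes Theorem \ref{THM: SchauderStrongFactor2}.(d) into condition (d) of the proposition.

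The proof is thus essentially algebraic bookkeeping on top of Theorem \ref{THM: SchauderStrongFactor2}. The only subtlety worth mentioning is in part (b), where one must check that the two-case ratio formulation and the single equation $a_{ij}=g_i h_j b_{ij}$ really encode the same relation; this is the one place where the zero pattern of $(b_{ij})$ must be handled explicitly.
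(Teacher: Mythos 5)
Your argument is correct and coincides with the paper's own proof: both reduce the proposition to Theorem \ref{THM: SchauderStrongFactor2} with $(\beta_n)=(\gamma_n)=(e^n)$ and then transcribe conditions (b), (c), (d) into the matrix entries via $S(he^j)=h_jS(e^j)$ and the disjointness of the supports of the $e^n$. The handling of the zero pattern of $(b_{ij})$ in your part (b) is the right (and only) point requiring care, and you treat it correctly.
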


\begin{proof}
We only have to see that conditions (b), (c) and (d) are just respectively conditions (b),
(c) and (d) of Theorem \ref{THM: SchauderStrongFactor2} rewritten for $\mu=\nu$ being the counting measure
$\lambda$ and $(\beta_n)=(\gamma_n)=(e^n)$.
Note that for every $i,j\in\mathbb{N}$ we have that
$a_{ij}=T(e^j)_i$ and $g_ih_jb_{ij}=g_ih_jS(e^j)_i=g_iS(h_je^j)_i=g_iS(he^j)_i$. So (b) $\Leftrightarrow$
Theorem \ref{THM: SchauderStrongFactor2}.(b). Since $\int T(e^j)e^i\,d\lambda=T(e^j)_i=a_{ij}$ and
$S(he^j)e^i=S(h_je^j)e^i=h_jS(e^j)e^i=h_jS(e^j)_ie^i=h_jb_{ij}e^i$ we have that (c) $\Leftrightarrow$
Theorem \ref{THM: SchauderStrongFactor2}.(c). Moreover as
$$
\int\Big|\sum_{j=1}^mr_jS(he^j)e^n\Big|\,d\lambda=\int\Big|\sum_{j=1}^mr_jh_jb_{nj}e^n\Big|\,d\lambda=\Big|\sum_{j=1}^mr_jh_jb_{nj}\Big|,
$$
it follows that (d) $\Leftrightarrow$ Theorem \ref{THM: SchauderStrongFactor2}.(d).
\end{proof}



Let $\mathcal{C}$ be the Ces\`aro operator which maps a~real sequence $x=(x_n)$ into the
sequence of its Ces\`aro means
$\mathcal{C}(x)=\big(\frac{1}{n}\sum_{i=1}^nx_i\big)$. It is well
known that $\mathcal{C}\colon\ell^r\to\ell^r$ continuously for
every $1<r<\infty$ (see \cite[Theorem
326]{hardy-littlewood-polya}) and it can be described by the
infinite matrix $(b_{ij})$ where $b_{ij}=\frac{1}{i}$ if $j\le i$
and $b_{ij}=0$ if $j>i$, that is,
$$
(b_{ij})=\left(\begin{array}{cccccc} 1 & 0 & 0 & 0 & 0 & \cdots \\
\frac{1}{2} & \frac{1}{2} & 0 & 0 & 0 & \cdots \\
\frac{1}{3} & \frac{1}{3} & \frac{1}{3} & 0 & 0 & \cdots \\
\frac{1}{4} & \frac{1}{4} & \frac{1}{4} & \frac{1}{4} & 0 & \cdots \\
\vdots & \vdots & \vdots & \vdots & \vdots & \ddots \end{array}\right).
$$

Fix $1\le p<\infty$, $1<q,r<\infty$ and let $T\colon \ell^p\to\ell^q$ be a nontrivial continuous operator
described by the infinite matrix $(a_{ij})$  with $a_{ij}=T(e^j)_i$. Note that
$(e^n)$ is a Schauder Basis on $\ell^p$, $\ell^q$, $\ell^r$ and $(\ell^q)'$.

\begin{proposition}\label{PROP: CesaroFactor}
Let $h\in\ell^{s_{pr}}$ (see \eqref{EQ: spq-Def} for the
definition of $s_{pr}$). The following statements are
equivalent{\rm:}
\begin{itemize}\setlength{\leftskip}{-3ex}\setlength{\itemsep}{.5ex}
\item[(a)] $T$ factors strongly through $\mathcal{C}$ and $M_h$, that is,
there exists $g\in\ell^{s_{rq}}$ such that
$$
\xymatrix{
\ell^p \ \ar[r]^{T} \ar@{.>}[d]_{M_h} & \ \ell^q  \\
\ell^r \ \ar[r]^{\mathcal{C}} & \ \ell^r \ \ar@{.>}[u]_{M_g}}
$$

\item[(b)] There exists $g\in\ell^{s_{rq}}$ such that
$$
a_{ij}=\left\{\begin{array}{ll} \frac{g_ih_j}{i} & \textnormal{if } j\le i \\ 0 & \textnormal{if } j>i \end{array}\right.
$$

\item[(c)] There exists a constant $C>0$ such that the inequality
$$
\sum_{i=1}^n\sum_{j=1}^m r_{ij}a_{ij}\le C
\Big(\sum_{i=1}^n\frac{1}{i^{s_{rq}'}}\Big|\sum_{j=1}^{\min\{i,m\}}h_jr_{ij}\Big|^{s_{rq}'}\Big)^{\frac{1}{s_{rq}'}},
\quad\, n,m\in\mathbb{N}
$$
holds for every  $(r_{ij})\in
B_{\ell^\infty}$.
\end{itemize}
Moreover, in the case when $r\le q$, the conditions (a)-(c) are equivalent to
\begin{itemize}\setlength{\leftskip}{-3ex}\setlength{\itemsep}{.5ex}
\item[(d)] There exists a~constant $C>0$ such that the inequality
$$
\sum_{j=1}^mr_ja_{nj}\le
C\frac{1}{n}\Big|\sum_{j=1}^{\min\{n,m\}}h_jr_j\Big|, \quad\,
n,m\in\mathbb{N}
$$
holds for every  $(r_j)\subset
B_{\ell^\infty}$.
\end{itemize}
\end{proposition}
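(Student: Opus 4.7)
The natural approach is to invoke Proposition~\ref{PROP: MatrixStrongFactor} with the choices $X_1(\lambda)=\ell^p$, $X_2(\lambda)=\ell^r$, $Y_1(\lambda)=\ell^q$, $Y_2(\lambda)=\ell^r$, and $S=\mathcal{C}$ (whose matrix is the given $(b_{ij})$). First I would verify the hypotheses of that proposition: since $1\le p<\infty$ and $1<q,r<\infty$, the unit vectors $(e^n)$ form a Schauder basis in $\ell^p$, $\ell^r$, $\ell^q$ and $(\ell^q)'=\ell^{q'}$; the formula $(\ell^a)^{\ell^b}=\ell^{s_{ab}}$ gives $Y_2^{Y_1''}=(\ell^r)^{\ell^q}=\ell^{s_{rq}}$, which is saturated; and $Y_2(\lambda)=\ell^r$ is $\sigma$-order continuous because $r<\infty$. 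The hypothesis $h\in\ell^{s_{pr}}$ is exactly $h\in X_1^{X_2}=(\ell^p)^{\ell^r}$.

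With the master proposition at hand, the equivalence (a)$\Leftrightarrow$(b) is pure rewriting: the identity $a_{ij}/b_{ij}=g_ih_j$ for $b_{ij}\neq 0$ translates, using $b_{ij}=i^{-1}$ when $j\le i$ and $b_{ij}=0$ otherwise, into $a_{ij}=g_ih_j/i$ for $j\le i$ and $a_{ij}=0$ for $j>i$.

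The real content lies in the translation of condition~(c). Using $(\ell^{q'})''=\ell^{q'}$ one computes
$$
(\ell^r\pi\ell^{q'})'=(\ell^r)^{(\ell^{q'})'}=(\ell^r)^{\ell^q}=\ell^{s_{rq}},
\qquad
(\ell^r\pi\ell^{q'})''=\ell^{s_{rq}'}.
$$
Because $\ell^r$ is $\sigma$-order continuous, Remark~\ref{REM: StrongFactor} guarantees that every finitely supported sequence belongs to the $\sigma$-order continuous part of $\ell^r\pi\ell^{q'}$, and Remark~\ref{REM: StrongFactor2} then yields that the $\pi$-norm of such a sequence equals its bidual norm, namely
$$
\Big\|\sum_{i=1}^n c_i\, e^i\Big\|_{\ell^r\pi\ell^{q'}}=\Big(\sum_{i=1}^n|c_i|^{s_{rq}'}\Big)^{1/s_{rq}'}.
$$
Substituting $c_i=\sum_{j=1}^m h_jr_{ij}b_{ij}=i^{-1}\sum_{j=1}^{\min\{i,m\}}h_jr_{ij}$ into condition~(c) of Proposition~\ref{PROP: MatrixStrongFactor} produces exactly the inequality displayed in~(c) here.

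For the final claim, when $r\le q$ one has $s_{rq}=\infty$ and consequently $\ell^\infty=Y_2^{Y_1''}$, which is the extra hypothesis making the equivalence of (d) with (a)--(c) available from Proposition~\ref{PROP: MatrixStrongFactor}. Substituting $b_{nj}$ in the master inequality $\sum_j r_ja_{nj}\le C|\sum_j h_jr_jb_{nj}|$ yields
$$
\sum_{j=1}^m r_j a_{nj}\le C\cdot\frac{1}{n}\Big|\sum_{j=1}^{\min\{n,m\}}h_jr_j\Big|,
$$
which is precisely~(d). The only genuinely non-routine step is the identification of the $\pi$-product norm with the $\ell^{s_{rq}'}$-norm on finitely supported sequences; everything else is substitution and verification of the standing hypotheses.
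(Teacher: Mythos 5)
Your proposal is correct and follows essentially the same route as the paper: both reduce the statement to Proposition~\ref{PROP: MatrixStrongFactor} with the same choices of spaces, translate (b) via the explicit Ces\`aro matrix, identify the $\pi$-product norm on finitely supported sequences with the $\ell^{s_{rq}'}$-norm through $\big(\ell^r\pi(\ell^q)'\big)''=\ell^{s_{rq}'}$ and Remark~\ref{REM: StrongFactor2}, and handle (d) via $s_{rq}=\infty$ when $r\le q$. No gaps.
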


\begin{proof}
Note that both $\ell^r$ and $(\ell^q)'$ are $\sigma$-order continuous
(as $r<\infty$ and $q>1$), $(\ell^r)^{(\ell^q)''}=(\ell^r)^{\ell^q}=\ell^{s_{rq}}$ and $(\ell^p)^{\ell^r}=\ell^{s_{pr}}$.
Also note that if $r\le q$ then $s_{rq}=\infty$ and so $(\ell^r)^{(\ell^q)''}=\ell^\infty$.
Then, we only have to see that (b), (c), (d) is just to rewrite respectively conditions (b), (c),
(d) of Proposition \ref{PROP: MatrixStrongFactor} for $X_1(\lambda)=\ell^p$, $X_2(\lambda)=\ell^r$,
$Y_1(\lambda)=\ell^q$, $Y_2(\lambda)=\ell^r$ and $S=\mathcal{C}$.

As noted above, the elements of the matrix of $\mathcal{C}$ are $b_{ij}=\frac{1}{i}$ if $j\le i$ and $b_{ij}=0$ if
$j>i$, so (b) $\Leftrightarrow$ Proposition \ref{PROP: MatrixStrongFactor}.(b).

By Remark \ref{REM: StrongFactor2} and noting that $\big(\ell^r\pi(\ell^q)'\big)''
=\big((\ell^r)^{(\ell^q)''}\big)'=(\ell^{s_{r\hspace{-.5mm}q}})'
=\ell^{s_{r\hspace{-.5mm}q}'}$ with equals norms and $s_{r\hspace{-.5mm}q}'<\infty$ (as $s^{r\hspace{-.5mm}q}>1$),
for every $n,m\in\mathbb{N}$ and $(r_j)\subset B_{\ell^\infty}$ it follows that
\begin{eqnarray*}
\Big\Vert\sum_{i=1}^n\Big(\sum_{j=1}^mh_jr_{ij}b_{ij}\Big)e^i\Big\Vert_{\ell^r\pi(\ell^q)'}
& = & \Big\Vert\sum_{i=1}^n\Big(\sum_{j=1}^mh_jr_{ij}b_{ij}\Big)e^i\Big\Vert_{(\ell^r\pi(\ell^q)')''} \\
& = & \Big\Vert\sum_{i=1}^n\Big(\sum_{j=1}^mh_jr_{ij}b_{jj}\Big)e^i\Big\Vert_{\ell^{s_{rq}'}}
\\ & = & \Big(\sum_{i=1}^n\Big|\sum_{j=1}^mh_jr_{ij}b_{ij}\Big|^{s_{rq}'}\Big)^{\frac{1}{s_{rq}'}}
\\ & = & \Big(\sum_{i=1}^n\frac{1}{i^{s_{rq}'}}\Big|\sum_{j=1}^{\min\{i,m\}}h_jr_{ij}\Big|^{s_{rq}'}\Big)^{\frac{1}{s_{rq}'}}.
\end{eqnarray*}
Hence, (c) $\Leftrightarrow$ Proposition \ref{PROP: MatrixStrongFactor}.(c).

(d) $\Leftrightarrow$ Proposition \ref{PROP: MatrixStrongFactor}.(d) holds as
$$
\sum_{j=1}^mh_jr_jb_{nj}=\frac{1}{n}\sum_{j=1}^{\min\{n,m\}}h_jr_j.
$$
\end{proof}

Finally we show how the matrix of $T$ must looks for $T$ can be strongly factored through the Ces\`aro operator.

\begin{proposition}\label{PROP: CesaroFactor2}
Let $h\in\ell^{s_{pr}}$ and suppose that $h_1\not=0$. The following statements are equivalent:
\begin{itemize}\setlength{\leftskip}{-3ex}\setlength{\itemsep}{.5ex}
\item[(a)] $T$ factors strongly through $\mathcal{C}$ and $M_h$.

\item[(b)] $a_{ij}=0$ for $j>i$, $a_{ij}=\frac{h_ja_{i1}}{h_1}$ for $j\le i$ and $(ia_{i1})\in\ell^{s_{rq}}$.

\item[(c)] The matrix of $T$ looks as
$$
\left(\begin{array}{cccccc}
h_1\alpha_1 & 0 & 0 & 0 & 0 & \cdots \\
h_1\alpha_2 & h_2\alpha_2 & 0 & 0 & 0 & \cdots \\
h_1\alpha_3 & h_2\alpha_3 & h_3\alpha_3 & 0 & 0 & \cdots \\
h_1\alpha_4 & h_2\alpha_4 & h_3\alpha_4 & h_4\alpha_4 & 0 & \cdots \\
h_1\alpha_5 & h_2\alpha_5 & h_3\alpha_5 & h_4\alpha_5 & h_5\alpha_5 & \cdots \\
\vdots & \vdots & \vdots & \vdots & \vdots & \ddots\end{array}\right)
$$
where $(\alpha_n)\in\mathbb{R}^{\mathbb{N}}$ is such that
$(n\alpha_n)\in\ell^{s_{rq}}$.
\end{itemize}
\end{proposition}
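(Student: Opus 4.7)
The plan is to derive everything from the characterization already proved in Proposition \ref{PROP: CesaroFactor}, specifically its condition (b), which pins down the entries $a_{ij}$ in terms of an unknown weight $g \in \ell^{s_{rq}}$ via the identities $a_{ij}=g_ih_j/i$ for $j\le i$ and $a_{ij}=0$ for $j>i$. Since everything below (b) in the current statement is simply a reformulation of that relation, the proof should be a direct unwinding rather than a genuinely new argument.

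For (a) $\Leftrightarrow$ (b): I would invoke Proposition \ref{PROP: CesaroFactor} to replace (a) by the existence of $g \in \ell^{s_{rq}}$ with $a_{ij}=g_ih_j/i$ when $j\le i$ and $a_{ij}=0$ otherwise. Evaluating at $j=1$ and using $h_1\neq 0$ gives $g_i = ia_{i1}/h_1$, which is the unique candidate for $g$. Substituting this back into the formula for general $j\le i$ yields $a_{ij} = h_j a_{i1}/h_1$, and the membership $g\in\ell^{s_{rq}}$ translates exactly into $(ia_{i1})\in\ell^{s_{rq}}$. Conversely, if (b) holds, setting $g_i:=ia_{i1}/h_1$ produces a sequence in $\ell^{s_{rq}}$ that fits Proposition \ref{PROP: CesaroFactor}(b), so $T$ factors as required.

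For (b) $\Leftrightarrow$ (c): I would define $\alpha_i := a_{i1}/h_1$, so that the relation $a_{ij}=h_j a_{i1}/h_1$ for $j\le i$ becomes $a_{ij}=h_j\alpha_i$, reproducing the displayed matrix. The condition $(ia_{i1})\in\ell^{s_{rq}}$ is equivalent to $(i\alpha_i)\in\ell^{s_{rq}}$ because the factor $1/h_1$ is a nonzero scalar. The reverse direction, starting from a sequence $(\alpha_i)$ with $(i\alpha_i)\in\ell^{s_{rq}}$, is immediate: one reads $a_{i1}=h_1\alpha_i$ off the first column and recovers $(ia_{i1})\in\ell^{s_{rq}}$ as well as $a_{ij}=h_j\alpha_i=h_j a_{i1}/h_1$ for $j\le i$.

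There is essentially no hard step here; the only thing to be careful about is the role of the hypothesis $h_1\neq 0$, which is what makes the first column of the matrix determine the free parameters $g_i$ (equivalently $\alpha_i$) uniquely and lets one pass freely between the two parametrizations. Without $h_1\neq 0$ one would have to track which row/column indices can legitimately be used to recover $g$, so I would state this dependence explicitly at the start of the argument to avoid confusion.
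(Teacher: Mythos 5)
Your proposal is correct and follows essentially the same route as the paper: both reduce everything to Proposition \ref{PROP: CesaroFactor}(b), recover $g_i=ia_{i1}/h_1$ from the first column using $h_1\neq0$, and set $\alpha_i=a_{i1}/h_1$ to pass to the matrix form. The only cosmetic difference is that the paper organizes the argument as a cycle (a)$\Rightarrow$(b)$\Rightarrow$(c)$\Rightarrow$(a) rather than as two separate equivalences.
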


\begin{proof}
(a) $\Rightarrow$ (b)
From Proposition \ref{PROP: CesaroFactor} there exists $g\in\ell^{s_{rq}}$ such that
$$
a_{ij}=\left\{\begin{array}{ll} \frac{g_ih_j}{i} & \textnormal{if } j\le i \\ 0 & \textnormal{if } j>i \end{array}\right..
$$
Then $a_{i1}=\frac{g_ih_1}{i}$ for all $i$ and so $a_{ij}=\frac{h_ja_{i1}}{h_1}$ for every $j\le i$.
Also note that $(ia_{i1})=h_1g\in\ell^{s_{rq}}$.

(b) $\Rightarrow$ (c) Taking
$(\alpha_n)=\big(\frac{a_{n1}}{h_1}\big)$ we have that
$h_j\alpha_i=\frac{h_ja_{i1}}{h_1}=a_{ij}$ for every $j\le i$ and
$(n\alpha_n)=\frac{1}{h_1}(na_{n1})\in\ell^{s_{rq}}$.

(c) $\Rightarrow$ (a) Taking $g=(i\alpha_i)\in\ell^{s_{rq}}$ it follows that
$$
a_{ij}=\left\{\begin{array}{ll} h_j\alpha_i=\frac{g_ih_j}{i} & \textnormal{if } j\le i \\ 0 & \textnormal{if } j>i \end{array}\right..
$$
Then, from Proposition \ref{PROP: CesaroFactor}, (a) holds.
\end{proof}

If $T$ factors strongly through $\mathcal{C}$ and $M_h$ then there exists $h_j\not=0$ as $T$ is non trivial.
So, given $0\not=h\in\ell^{s_{pr}}$ and denoting $j_0=\min\big\{j\in\mathbb{N}: h_j\not=0\big\}$,
similarly to Proposition \ref{PROP: CesaroFactor2}, we have that $T$ factors strongly through $\mathcal{C}$
and $M_h$ if and only if its matrix looks as
$$
\left(\begin{array}{cccccccc}
0 & \cdots & 0 & 0 & 0 & 0 & 0 & \cdots \\
\vdots &  & \vdots & \vdots & \vdots & \vdots & \vdots &  \\
0 & \cdots & 0 & 0 & 0 & 0 & 0 & \cdots \\
0 & \cdots & 0 & h_{j_0}\alpha_1 & 0 & 0 & 0 & \cdots \\
0 & \cdots & 0 & h_{j_0}\alpha_2 & h_{j_0+1}\alpha_2 & 0 & 0 & \cdots \\
0 & \cdots & 0 & h_{j_0}\alpha_3 & h_{j_0+1}\alpha_3 & h_{j_0+2}\alpha_3 & 0 & \cdots \\
0 & \cdots & 0 & h_{j_0}\alpha_4 & h_{j_0+1}\alpha_4 & h_{j_0+2}\alpha_4 & h_{j_0+3}\alpha_4 & \cdots \\
\vdots & & \vdots & \vdots & \vdots & \vdots & \vdots & \ddots\end{array}\right)
$$
for some $(\alpha_n)\in\mathbb{R}^{\mathbb{N}}$ such that
$(n\alpha_n)\in\ell^{s_{rq}}$ (note that the element $h_{j_0}\alpha_1$ is positioned at the $j_0$-th row
and the $j_0$-th column of the matrix).


\section{Domination by basis operators and representing operators}

As a result of the active research in several branches of the Harmonic Analysis, a lot of information is known about weighted norm inequalities for classical operators on weighted Banach function spaces, mainly regarding weighted $L^p$ and Lorentz spaces.
The bibliography on the subject is extremely broad; we refer the reader to  \cite{hardy-littlewood-polya} for the classical inequalities, and to \cite{ben,cru} and the references therein for an updated review of the state of the art. We will use also some  concrete results and ideas concerning weighted norm inequalities
 that can be found in the papers \cite{ash,kellogg,mhas,per}.

We will show in what follows the  characterization in terms of vector norm inequalities of what we call a representing operator for a Banach function space $X(\mu)$. This is essentially a modification of a basis operator $\mathcal F:L(\mu) \to \Lambda$ that allows to identify each function in $X(\mu)$ with some easy transformation of the basic coefficients of certain univocally associated function. Our motivation is given by the fact that, although the coefficients are not associated to a basis of the space $X(\mu)$, such kind of operator ---that we will call a representing operator---  allows to find an easy representation of the functions of the space by means of some basic-type coefficients. If $\mathcal F$ is a basis operator, we write $(\alpha_i(f))_{i =1}^\infty \in \Lambda$ for the basic coefficients of a function $f$, that is, $\mathcal F(f)=(\alpha_i(f))_{i =1}^\infty$.

\begin{definition}
Let $X(\mu)$ be a Banach function space over $\mu$ and $\ell$ a sequence space over the counting measure $c$ on $\mathbb N$. Let $\mathcal B$ be a Schauder basis of a Banach function space $L(\mu)$ and suppose that the basic coefficients of the functions of $L(\mu)$ are in a sequence space $\Lambda$ defined as the Banach lattice given generated by an unconditional basis of a Banach space.
Consider an operator $T: X(\mu) \to \ell$. We will say that $T$ is a \textit{representing operator}  for $X(\mu)$ (with respect to $\mathcal F$) if it is an injective two-sides-diagonal transformation of the basis operator $\mathcal F$.
\end{definition}

 Thus, technically a representing operator is an injective map such that there are a sequence $g=(g_j) \in (\Lambda)^{\ell}$ with $g_j \ne 0$ for all $j \in \mathbb N$ and a function $h \in X^{L}$, $h \ne 0$ $\mu$-a.e., such that for every
 $x \in X_1(\mu_1)$, the sequence $T(x)=(\beta(x)_j) \in \ell$ can be written as
$$
\beta(x)_j = P_j \circ T(x)= g_j \mathcal F (hx) = g_j \alpha_j(hx).
$$
That is, for  the elements $ y \in h \cdot X(\mu) \subseteq  L(\mu)$ we have that
$$
\alpha_j(y)=\mathcal F(y)= g_j^{-1} \beta(h^{-1} y)_j.
$$
Equivalently, for each $x \in X(\mu)$, there is a sequence $(\beta_j) \in \ell$ such that
$$
x =T^{-1}((\beta_j)) =h^{-1} \mathcal F^{-1}((g_j^{-1} \beta_j)).
$$

\begin{example} \label{ex1}
\noindent
\begin{itemize}

\item[(i)]
An easy example of the above introduced notion is the so called generalized Fourier series. Consider $p=2$, an interval $I$ of the real line, the space $L^2(I)$ endowed with Lebesgue measure $dx$ and a weight function $w:I \to \mathbb R^+$, $w >0$. Note that the multiplication operator $M_{w^{1/2}}:L^2(w dx) \to L^2(I)$ defines an isometry. Take a sequence of functions $(\phi_n)_n$ belonging to $L^2(w dx)$ and such that the associated sequence $(b_n)_n$, where $b_n=w^{1/2} \phi_n$ for all $n$ defines an orthonormal basis $\mathcal B$ in $L^2(I)$, that is, it is orthogonal, norm one and complete. Note that this is equivalent to say that it defines an orthonormal basis in the weighted space $L^2(w dx)$. Consider the Fourier operator $\mathcal F_{\mathcal B}$  associated to the basis $\mathcal B$ of $L^2(I)$. Then the operator $T: L^2(w dx) \to \ell^2$ given by $T:= id_{\ell^2} \circ \mathcal F_{\mathcal B} \circ M_{w^{1/2}}$ is a representing operator for $L^2(w dx)$.

Concrete examples of this situation are given by classical orthogonal basis of polynomials in weighted $L^2$-spaces. For example, for the trivial case of the weight equal to $1$ and the space $L^2[-1,1]$, we can define the functions $\phi_n$ to be the  Legendre polynomials, that are solutions to the Sturm-Liouville problem and define the corresponding Fourier-Legendre series. Other non trivial cases also for $I=(-1,1)$ are given by the weight functions  $w(x)= (1-x^2)^{-1/2}(x)$ and $w(x)=(1-x^2)^{ 1/2}(x)$ and the Chebyshev polynomial of the first and second kinds, respectively. Laguerre polynomials give other example for $I=(0,\infty)$ and weight function $w(x)= e^{-x}$.

\item[(ii)]
Take a function $0 < h \in X(\mu)^{L^p(\mu)}$ and consider a sequence $0 < \lambda = (\lambda_i) \in \ell^2$.
 Let us
write $c$ for the counting measure in $\mathbb N$. Consider
the space $\ell^1(\lambda c)= \{(\tau_i) : (\lambda_i \tau_i) \in \ell^1 \}$ with the corresponding norm $\| (\tau_i) \|_{\ell^1( \lambda c)} = \sum |\lambda_i \tau_i|$. Then we have that $\ell^2 \hookrightarrow \ell^1(\lambda c)$, and so the space of multiplication operators $(\ell^2)^{\ell^1(\lambda c)}$ is not trivial.
A direct computation shows also that  $(\ell^2)^{\ell^1(\lambda c)}=\ell^2(\lambda^2 c)$. Then, for every $\tau=(\tau_i) \in \ell^2(\lambda^2 c)$ with $\tau_i \neq 0$ for all $i \in \mathbb N$ we have that the operator $T: X(\mu) \to  \ell^1(\lambda c)$ given by $T(\cdot)= \tau  \mathcal F(h \, \cdot)$ is a representing operator for the space $X(\mu)$.

\end{itemize}

\end{example}

Let $J$ be a  finite subset of $\mathbb N$, and write $P_J:\ell \to \ell$ for the standard projection on the subspace generated by the elements of $\mathcal B$ with subindexes in $J$.
If $T:X(\mu) \to \ell$ is an operator, consider the net $\{P_J \circ T=:T_J: \mathbb N \supset J \, \text{finite}\}$, where the order is given by the inclusion of the set of subindexes, that is $P_J \circ T \le P_{J'} \circ T$ if and only if $J \subseteq J'.$ By definition,
$$
T = \lim_B P_B \circ T
$$
as a pointwise limit. In what follow we will characterize representing operators in terms of inequalities using this approximation procedure and a compactness argument. Thus, considering the basic (biorthognal) functionals $b'_i \in L(\mu)'$,  $i \in J$, associated to the basis of $L(\mu)$ that defines the Fourier operator that we are considering, we have
$$
P_J(x):= \sum_{j \in J} \langle f, b'_j \rangle e_j, \quad f \in L^p(\mu).
$$

Fix a function $h \in (X(\mu))^{L(\mu)}$ and suppose that $\Lambda^\ell$ is non-trivial. Assume that the conditions are given in order to obtain that $( \Lambda)^\ell = (\ell')^{\Lambda'} = (\Lambda' \pi \ell)^*$.
The domination  inequality that must be considered in this case is given by the following expression.
$$
\sum_{i=1}^n \int P_J \circ T(x_i) y_i' \, d c \le
\Big\| \Big( \sum_{i=1}^n  \sum_{j \in J} \langle h x_i, b'_i \rangle \langle e_j,  \cdot \,  y_i'  \rangle \Big) \Big\|_{\Lambda' \pi \ell}
$$
$$
=
\sup_{g \in B_{(\Lambda)^{\ell}} }
\Big( \sum_{i=1}^n  \sum_{j\in J} \langle h x_i, b'_i \rangle \langle e_j,  g y_i'  \rangle \Big),
$$
that is, we are considering the sequence $\big( \sum_{i=1}^n  \langle h x_i, b'_i \rangle (y_i')_j \big)_{j \in J} \in \Lambda' \pi \ell$ as the functional of the dual of  $( \Lambda)^\ell $  given by
$$
\Big( \sum_{i=1}^n  \sum_{j \in J} \langle h x_i, b'_i \rangle \langle e_j,  \cdot \,  y_i'  \rangle \Big):
 \Lambda^\ell  \to \mathbb R.
$$
After taking into account the particular descriptions of the elements of the spaces involved, we get the equivalent expression for the inequality
$$
\sum_{i=1}^n \sum_{j \in J} \langle T(x_i), e_j \rangle  \langle e_j ,((y_i')_j) \rangle
$$
$$
\le  \sup_{ g \in B_{\Lambda^{\ell}}}
\Big( \sum_{i=1}^n  \sum_{j \in J} \langle h x_i, b'_j \rangle \langle e_j,  (g_j ( y_i')_j)  \rangle \Big)
= \sup_{ g \in B_{\Lambda)^{\ell}}}
\Big(
\sum_{i=1}^n  \sum_{j \in J} \langle h x_i, b'_j \rangle g_j ( y_i')_j \Big)
$$

and so the initial inequality is equivalent to the following one,
$$
\sum_{i=1}^n \sum_{j \in J} T(x_i)_j \, (y_i')_j
\le
\sup_{g \in B_{(\Lambda)^{\ell}}}
\Big(
\sum_{i=1}^n  \sum_{j \in J} \alpha_j(h x_i) \, g_j \, ( y_i')_j
\Big),
$$
where $\alpha_j(h x_i)= \int h x_i b'_j d \mu$, $j \in J$, are the $j$-th Fourier coefficients of the function $h x_i$ associated to the basis $\mathcal B$.

Thus, the assumptions on the properties of $(X(\mu))^{L(\mu)}$ and $\Lambda^{\ell}$ provides the following


\begin{theorem}  \label{main}
Suppose that $\Lambda'$ and $\ell$ satisfies that $\Lambda'\pi \ell'$ is saturated and
$(\Lambda'\pi \ell')^*= (\Lambda')^\ell$, and let $h$ be a measurable function  such that $ 0 < |h| \in (X(\mu))^{L(\mu)}$.
The following statements are equivalent for an operator $T: X(\mu) \to \ell$.

\begin{itemize}

\item[(i)] For every finite set $J \subseteq \mathbb N$
the inequality
$$
\sum_{i=1}^n \sum_{j \in J} T(x_i)_j \, (y_i')_j
\le
C
\sup_{g \in B_{\Lambda^{\ell}}}
\Big(
\sum_{i=1}^n  \sum_{j \in J} \alpha_j(h x_i) \, g_j \, ( y_i')_j
\Big),
$$
holds for every $x_1,...,x_n \in X_1$ and $y_1',...,y_n' \in \ell'$.

\item[(ii)] $T$ is a representing operator with respect to $\mathcal F$, that is, there is a sequence  $g \in \Lambda^{\ell}$ such that
$(T(x))_j=g_j \cdot \alpha_j (hx)$ for all $x\in X(\mu)$ and $j \in \mathbb N$. In other words,  $T$ factors
through $\mathcal F$ as
 \begin{equation}\label{factodiag2}
\xymatrix{
X(\mu) \ar[rr]^{T} \ar@{.>}[d]_{M_h} & & \ell \\
L(\mu) \ar@{.>}[rr]^{\mathcal F}&  & \Lambda. \ar@{.>}[u]_{M_g}}
\end{equation}

\end{itemize}

\end{theorem}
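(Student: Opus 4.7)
The plan is to recognize Theorem~\ref{main} as the specialization of Corollary~\ref{COR: StrongFactor} to the setting $X_1(\mu)=X(\mu)$, $X_2(\mu)=L(\mu)$, $Y_1(\nu)=\ell$, $Y_2(\nu)=\Lambda$, where $\nu=c$ is the counting measure on $\mathbb{N}$, and $S=\mathcal{F}\colon L(\mu)\to\Lambda$. Under this identification the diagram \eqref{factodiag2} is precisely the strong factorization of $T$ through $\mathcal{F}$ and $M_h$, with multiplier $g\in Y_2^{Y_1''}=\Lambda^{\ell''}=\Lambda^{\ell}$ (using the Fatou property of $\ell$); coordinate-wise, $T(x)=g\cdot\mathcal{F}(hx)$ reads $T(x)_j=g_j\,\alpha_j(hx)$, which is exactly condition (ii).

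For (ii)$\Rightarrow$(i) I would substitute the factorization directly. Given $g\in\Lambda^{\ell}$ with $T(x)_j=g_j\,\alpha_j(hx)$, for every finite $J\subset\mathbb{N}$, $x_1,\dots,x_n\in X(\mu)$ and $y_1',\dots,y_n'\in\ell'$,
$$
\sum_{i=1}^n\sum_{j\in J}T(x_i)_j(y_i')_j=\sum_{i=1}^n\sum_{j\in J}g_j\,\alpha_j(hx_i)(y_i')_j\le\|g\|_{\Lambda^\ell}\sup_{g'\in B_{\Lambda^\ell}}\sum_{i=1}^n\sum_{j\in J}\alpha_j(hx_i)\,g'_j\,(y_i')_j,
$$
so (i) holds with $C=\|g\|_{\Lambda^\ell}$.

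The substantive direction is (i)$\Rightarrow$(ii). First, using the product-space duality formula $(X\pi Y)'=X^{Y'}$ from the preliminaries together with the standing hypothesis that identifies $\Lambda^{\ell}$ as the dual of a suitable $\pi$-product space, the right-hand side of (i) is recognized as the corresponding predual norm of $\sum_i\mathcal{F}(hx_i)\,y_i'\,\chi_J$. Next, letting $J\nearrow\mathbb{N}$ along the directed family of finite sets, the convergence $y_i'\chi_J\to y_i'$ in $\ell'$ combined with the continuity estimates
$$
\Big|\int z(y_i'-y_i'\chi_J)\,dc\Big|\le\|z\|_{\ell}\,\|y_i'-y_i'\chi_J\|_{\ell'},\qquad\|w(y_i'-y_i'\chi_J)\|_{\Lambda\pi\ell'}\le\|w\|_{\Lambda}\,\|y_i'-y_i'\chi_J\|_{\ell'},
$$
applied exactly as in the limit arguments of the proofs of Theorems~\ref{THM: SchauderStrongFactor} and \ref{THM: SchauderStrongFactor2}, upgrades the finite-$J$ inequality to the full inequality of Corollary~\ref{COR: StrongFactor}.(b). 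The preconditions of that corollary, namely saturation of $Y_2^{Y_1''}=\Lambda^{\ell}$ and the condition $y_2y_1'\in(\Lambda\pi\ell')_a$, are guaranteed by the standing duality hypothesis and Remark~\ref{REM: StrongFactor}. The corollary then produces $g\in\Lambda^{\ell}$ with $T(x)=g\cdot\mathcal{F}(hx)$, i.e.\ condition (ii). The main obstacle is the clean identification of the supremum on the right-hand side of (i) as a predual ($\pi$-product) norm and the subsequent limit as $J\nearrow\mathbb{N}$; once this bookkeeping is handled, the conclusion is an immediate application of the earlier strong factorization results.
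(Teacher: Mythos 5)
Your reduction to Corollary~\ref{COR: StrongFactor} is sound in outline, but your route through the substantive direction (i)$\Rightarrow$(ii) is genuinely different from the paper's. You pass to the limit $J\nearrow\mathbb N$ in the scalar inequality first, so as to obtain the full hypothesis of Corollary~\ref{COR: StrongFactor}.(b) for $T$ and $\mathcal F$, and then invoke the corollary once. The paper instead applies Corollary~\ref{COR: StrongFactor} separately to each truncation: the finite-$J$ inequality is read as the corollary's hypothesis for the pair $P_J\circ T$, $P_J\circ\mathcal F$, which yields a net of multipliers $g_J\in B_{\Lambda^{\ell}}$ with $P_J\circ T(x)=g_J\cdot P_J\circ\mathcal F(hx)$; a weak*-convergent subnet $g_\eta\to g_0$ in $B_{\Lambda^{\ell}}$ (viewed as a dual ball via the standing duality assumption) is then combined with $P_JT(x)\to T(x)$, which follows from unconditionality of the basis of $\ell$, to get $T(x)=g_0\cdot\mathcal F(hx)$ coordinatewise. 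Your version trades the compactness/net argument for a single limit inside the inequality, which is cleaner when it works; the paper's version only needs convergence of truncations on the range side and coordinatewise evaluation, never on the dual side.

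That difference is exactly where your argument has a step that would fail as written: you justify the limit by $y_i'\chi_J\to y_i'$ in $\ell'$-norm. Nothing in the hypotheses of Theorem~\ref{main} makes $(e^n)$ a Schauder basis of $\ell'$ (equivalently, makes $\ell'$ $\sigma$-order continuous); in the paper's own Example~\ref{ex1}(ii) one has $\ell=\ell^1(\lambda c)$, so $\ell'$ is an $\ell^\infty$-type space and truncations of a general $y'$ do not converge in norm. The repair is short: since $\sum_{j\in J}\alpha_j(hx_i)\,g_j\,(y_i')_j=\langle g,\,(P_J\mathcal F(hx_i))\,y_i'\rangle$, put the truncation on the $\Lambda$-factor and use $\Vert (P_Jw-w)\,y'\Vert_{\Lambda\pi\ell'}\le\Vert P_Jw-w\Vert_{\Lambda}\,\Vert y'\Vert_{\ell'}\to0$, which holds because $\Lambda$ is by definition generated by an unconditional basis; the left-hand side needs no basis at all, only absolute summability of $T(x_i)y_i'\in\ell^1$. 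With that substitution your limit passage, and hence the whole argument, goes through. (A smaller bookkeeping point: your identification $Y_2^{Y_1''}=\Lambda^{\ell''}=\Lambda^{\ell}$ appeals to a Fatou property of $\ell$ that is not listed among the hypotheses; the paper hides the same issue inside its standing duality assumption identifying $\Lambda^{\ell}$ with the dual of the $\pi$-product, so you are no worse off, but it should be flagged as an assumption rather than derived.)
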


\begin{proof}
Let us see that (i) implies (ii). We can assume without loss of generality that $C=1$. Note that as a consequence of Remark \ref{REM: StrongFactor} the requirements on $\Lambda$ and $\ell$ provides the conditions on these spaces for applying Corollary \ref{COR: StrongFactor}.
By the computations above, we obtain that for each finite set $J$ we have a norm one  sequence $g_J \in {\Lambda}^{\ell}$  satisfying that
$$
P_J \circ T(x) = g_J \cdot P_J \circ  \mathcal F(h x).
$$
Consider the net
 $\mathcal N:=\{g_J: J \subseteq \mathbb N \, \textit{finite} \}$, where the order is given by the inclusion of the finite sets used for the subindexes. We can assume without loss of generality that the support of each function $g_J$ is in $J$, that is, the coefficients $(g_J)_k$ of the sequence $g_J$ are $0$ for $k \notin J$.
Since all the functions of the net are in  the unit ball and due to the product compatibility of the pair defined by $\Lambda'$ and $\ell$, we have that the net is included in the weak* compact set  $B_{\Lambda^\ell}$. Therefore, it has a convergent subnet $\mathcal N_0$, that is, there is a sequence $g_0 \in B_{\Lambda^\ell}$ such that
$$
\lim_{\eta \in \mathcal N_0} g_\eta =g_0
$$
in the weak* topology given by the dual pair $\big\langle \Lambda' \pi \ell, \Lambda^\ell \big\rangle$.

Note now that for a fixed $x \in X(\mu)$, due to the fact that we are assuming that $\ell$ has a unconditional basis with associated projections $P_J$, we have
$$
\lim_{J \in \mathcal N \, \textit{finite} } P_J \circ T(x)= T(x).
$$
Then,
$$
T(x)=
\lim_{J \in \mathcal N \, \textit{finite} } P_J \circ T(x)= \lim_{\eta \in \mathcal N_0} P_\eta \circ T(x) =
\lim_{\eta \in \mathcal N_0} g_\eta \cdot \mathcal F(hx) = g_0 \cdot \mathcal F(hx).
$$
This gives (ii) and finishes the proof, since the converse holds by a direct computation.

\end{proof}

Let us provide an example.
Consider again Example \ref{ex1}(ii), and recall that
 $(\ell^2)^{\ell^1(\lambda c)}=\ell^2(\lambda^2 c)$.
Theorem \ref{main} gives that an injective operator $T:X(\mu) \to \ell^1(\lambda c)$ is a representing operator by means of the Fourier operator if and only if
for every finite set $J \subseteq \mathbb N$
the inequality
$$
\sum_{i=1}^n \sum_{j \in J} T(x_i)_j \, (y_i')_j
\le
C
\sup_{g \in B_{\ell^2(\lambda^2 c)} }
\Big(
\sum_{i=1}^n  \sum_{j \in J} \alpha_j(h x_i) \, g_j \, ( y_i')_j
\Big),
$$
holds for every $x_1,...,x_n \in X(\mu)$ and $y_1',...,y_n' \in \ell^1(\lambda c)'$.

\begin{remark}
Let us gives some sufficient conditions for the product sequence space appearing in Theorem \ref{main} to satisfy what is needed.
The product compatibility of the pair $\ell^{p'}$ and $\ell$ means that
$$
(\ell^{p'})^{\ell}= (\ell')^{\ell^p} = \big(\ell' \pi \ell^{p'} \big)^*.
$$
For example, if $\ell'$ is $p$-convex we have that $\ell' \pi
\ell^{p'} $ is saturated and so, a Banach function space
 (see Proposition 2.2 in \cite{san}). Moreover, the
quoted result provides also the equality (under the assumption of
saturation of the product)
$$
\big(\ell' \pi \ell^{p'} \big)' = (\ell^{p'})^{\ell}.
$$
Consequently, if the product is order continuous, we get the
desired result. Conditions under which this space is order
continuous are given in  Proposition 5.3 in
\cite{calabuig-delgado-sanchezperez}: for example, if the norm of
the product is equivalent to
$$
\| \lambda \|_\pi  \sim \inf \big\{ \|\eta\|_{\ell^{p'}}
\cdot \| \gamma\|_{\ell'}: | \lambda |= \eta \cdot \gamma,  \, \eta \in \ell^{p'}, \, \gamma \in \ell' \big\},
$$
the space is order continuous if $\ell'$ is assumed to be order continuous (recall that $p>1$ and so
$\ell^{p'}$ is order continuous too). The formula above for the product space works for example if $\ell$ is $p'$-concave,
since this implies that $\ell'$ is $p$-convex that together with the $p'$-convexity of $\ell^{p'}$ provides the result.
Concrete examples for $\ell^p$ spaces has been given in Example \ref{ex1}.

\end{remark}

\section{Operators associated to trigonometric series}


Relevant historical examples are the ones associated to the
Fourier series and the corresponding Fourier coefficients. We
finish the paper by explicitly writing the results presented
previously in this setting.  We will write $\hat{x}(\cdot)$ for
the $i$-th Fourier (real) coefficients of the function $x$ with
indexes in the set $\mathbb Z$, writing the coefficients $a_n$
asociated to $cos$ functions as $\hat{x}(i)$  with positive $i$
and the coefficients $b_n$ for the functions $sin$ as $\hat{x}(i)$
with negative $i$.

\begin{itemize}

\item Due to the Hausdorff-Young inequality, we know that for $1 <
p \le 2$, the Fourier transform $\mathcal F_p$--- sending $L^{p}[-
\pi, \pi] \to \ell^{p'}$ that assigns to each function the
sequence of its Fourier coefficients is well-defined and
continuous. The Fourier transform is defined as $\mathcal F_2: L^2
\to \ell^2$. Suppose that we want to check if a particular
operator $\mathcal G_2:L^2[- \pi, \pi] \to \ell^2$  can be
extended to $L^p[- \pi,  \pi]$ through $\mathcal F_p$. That is, is
there a factorization for $\mathcal G_2$ as
$$
\xymatrix{
L^{2}[- \pi, \pi] \ar[rr]^{\mathcal G_2} \ar@{.>}[d]_{i} & & \ell^{2}\\
L^{p}[- \pi, \pi] \ar@{.>}[rr]^{\mathcal F_p}&  & \ell^{p'}  \ar@{.>}[u]_{M_\lambda}}
$$
for the operator $\mathcal G_2$ for some multiplication operator
given by a sequence $\lambda$.

We have  shown that this is equivalent to the following inequalities to hold for the operator $\mathcal G_2$.
For each $x_1,..., x_n \in L^{2}[- \pi, \pi] $ and $\lambda_i \in \ell^2$,
$$
 \sum_{k=1}^\infty \sum_{i=1}^n (\mathcal G_2(x_i))_k (\lambda_i)_k
$$
$$
\le
C \big\| \big( \sum_{i=1}^n \hat{x_i}(k) (\lambda_i)_k \big) \big\|_{\ell^r}
=
C \Big( \sum_{k=1}^\infty \big| \sum_{i=1}^n \hat{x_i}(k) (\lambda_i)_k \big|^r \Big)^{1/r}.
$$

\item For $1 < p \le 2$ again, Kellogg proved an improvement of
the Hausdorff-Young inequality, that assures that the
corresponding Fourier coefficients of the functions in $L^p$ can
be found in the smaller mixed norm space $L^{p',2} \subseteq
\ell^{p'}$.    Fix $1 \le p,q \le \infty$. The mixed norm sequence
space  $L^{p,q}$ was defined in \cite{kellogg} as the space of
sequences $\lambda=(\lambda_k)_{-\infty}^\infty$ such that
$$
\|\lambda\| =  \Big( \sum_{m=\infty}^\infty \Big( \sum_{k \in I(m)} |\lambda_k|^p \Big)^{q/p} \Big)^{1/q} < \infty,
$$
where $I(m)=\{ k \in \mathbb Z: 2^{m-1} \le k \le 2^m \}$ if $m >0$, $I(0)=\{0\}$ and
$I(m)=\{ k  \in \mathbb Z: -2^{-m} \le k \le -2^{-m-1} \}$ if $m <0$. It is easy to see that $L^{p',2} \subseteq \ell^{p'}$,
and so we have a factorization for the Fourier map as
$$
\xymatrix{
L^{p}[- \pi, \pi] \ar[rr]^{\mathcal F_p} \ar@{.>}[d]_{i} & &   \ell^{p'} \\
L^{p}[- \pi, \pi] \ar@{.>}[rr]^{\mathcal K_p}&  & L^{p',2}.  \ar@{.>}[u]_{i}}
$$

In Theorem 1 of \cite{kellogg}, it is proved that the space of multiplication operators (multipliers)
from $L^{p',2}$ to $\ell^{p'}$ can in fact be identified with $\ell^\infty$.
Consequently, our results imply that
 for every finite set $J \subseteq \mathbb Z$
the inequality
$$
\sum_{i=1}^n \sum_{j \in J} (\mathcal F_p{x_i})_j\, (\lambda_i')_j
\le
C
\sup_{g \in B_{\ell^\infty}}
\Big(
\sum_{i=1}^n  \sum_{j \in J} \hat{x_i}(j) \, g_j \, ( \lambda_i')_j
\Big),
$$
holds for every $x_1,...,x_n \in X_1$ and
$\lambda_1',...,\lambda_n' \in \ell^p$, what is obvious. However,
note that this is essentially a characterization, since any other
operator $\mathcal G_p$ from $L^p$ and having values in a~sequence
space $\ell$ such that $(L^{p',2})^{\ell}= {\ell^\infty}$
satisfying these inequalities has to be of the form $g \cdot K_p$
for a certain sequence $g \in \ell^\infty$.

\item The Hardy-Littlewood inequality, also for $1 < p \le 2$,
provides an example of an operator $\mathcal H_p$  sending the
Fourier coefficients of the functions in $L^p$ to a weighted
$\ell^p$ space. For $1  < p < 2 $, consider the weighted  sequence
space $\ell^p(W)$, where the weight  $W$ is given by $W=(W_n)=
(1/(n+1)^{2-p})$. The Hardy-Littlewood inequality  can be
understood as the fact that the Fourier operator can be defined as
$\mathcal H_p: L^p[- \pi, \pi] \to \ell^p(W)$ (see
\cite[S.2]{ash}, in particular Theorem B). Note that the
multiplication operator $M_\gamma: \ell^p(W) \to \ell^p$ given by
the sequence $\gamma= \Big((1/(n+1)^{\frac{2-p}{p}}\Big)$ defines
an isometry. Therefore, the factorization scheme
$$
\xymatrix{
L^{p}[- \pi, \pi] \ar[rr]^{\mathcal \gamma \cdot \mathcal H_p} \ar@{.>}[d]_{i} & &   \ell^{p} \\
L^{p}[- \pi, \pi] \ar@{.>}[rr]^{\mathcal H_p}&  &  \ell^p(W)  \ar@{.>}[u]_{M_\gamma}}
$$
provides other example of the situation we are describing. Indeed, for every multiplication operator $_\tau$ for $\tau \in (\ell^p(W))^{\ell^p}$ we can give an operator $\tau \cdot \mathcal H_p$ satisfying this factorization. Our results implies the class of all these operators is characterized in the following way: if $T:L^{p}[- \pi, \pi] \to \ell^p$ satisfies the  inequalities
$$
\sum_{i=1}^n \sum_{j \in J} (T({x_i}))_j\, (\lambda_i')_j
\le
C
\sup_{g \in B_{(\ell^p(W))^{\ell^p}}}
\Big(
\sum_{i=1}^n  \sum_{j \in J} \hat{x_i}(j) \, g_j \, ( \lambda_i')_j
\Big),
$$
for each finite subset $J \subset \mathbb Z$, for every $x_1,...,x_n \in L^p[-2 \pi, 2 \pi]$
and $\lambda_1',...,\lambda_n' \in \ell^{p'}$, then it has a factorization as the one above
for a certain $\tau \in (\ell^p(W))^{\ell^p}$.

\item Let us recall Example \ref{ex1}(i). A representing operator
$T: L^2(w dx) \to \ell^2$ associated to a weight function $w$ and
an orthogonal basis $\mathcal B$ with respect to the corresponding
weight function was considered. It allowed a factorization as
$$
\xymatrix{
L^{2}(w dx) \ar[rr]^{T} \ar@{.>}[d]_{M_{w^{1/2}}} & &   \ell^{2} \\
L^{2}[I] \ar@{.>}[rr]^{\mathcal F_{\mathcal B}}&  &  \ell^2. \ar@{.>}[u]_{id}}
$$
The corresponding vector norm inequality characterizing this factorization is
$$
\sum_{i=1}^n \sum_{j \in J} (T({x_i}))_j\, (\lambda_i')_j
\le
C \Big\|
\sum_{i=1}^n  \sum_{j \in J} (T(x_i))_j \, ( \lambda_i')_j
 \Big\|_{\ell^1}
$$
$$
= C
 \sum_{j \in J} \Big| \sum_{i=1}^n   (\mathcal F_{\mathcal B}(x_i))_j  \, ( \lambda_i')_j \Big|
$$
for a given constant $C>0$ and for each finite subset $J \subset \mathbb N$,
for every $x_1,...,x_n \in L^2(w dx)$ and $\lambda_1',...,\lambda_n' \in \ell^{2}$.


\end{itemize}


\end{document}